\newtheorem{thm}{Theorem}[section]
\newtheorem{lem}[thm]{Lemma}
\newtheorem{cor}[thm]{Corollary}
\theoremstyle{definition}
\newtheorem{dfn}[thm]{Definition}
\newcommand{\pc}{\mathrm{PC}}
\newcommand{\ce}{\mathrm{COR}}
\newcommand{\per}{\mathrm{Per}}
\newcommand{\ol}{\overline}
\newcommand{\sm}{\setminus}
\newcommand{\bd}{\mathrm{Bd}}
\newcommand{\ch}{\mathrm{Ch}}
\newcommand{\ga}{\gamma}
\newcommand{\si}{\sigma}
\newcommand{\om}{\omega}
\newcommand{\nin}{\not\in}
\newcommand{\C}{\mbox{$\mathbb{C}$}}
\newcommand{\disk}{\mathbb{D}}
\newcommand{\uc}{\mathbb{S}^1}
\newcommand{\lam}{\mathcal{L}}
\newcommand{\N}{\mathbb{N}}
\newcommand{\Sh}{\mathrm{Sh}}
\begin{document}

\date{August 26, 2011; revised December 16, 2011, and then January 5, 2012}
\title[Recurrent and periodic points in dendritic Julia sets]
{Recurrent and periodic points in dendritic Julia sets}

\author[A.~Blokh]{Alexander~Blokh}

\thanks{The author was partially
supported by NSF grant DMS--0901038}

\address[Alexander~Blokh]
{Department of Mathematics\\ University of Alabama at Birmingham\\
Birmingham, AL 35294-1170}

\email[Alexander~Blokh]{ablokh@math.uab.edu}

\subjclass[2010]{Primary 37B45; Secondary 37C25, 37E05, 37E25, 37F10, 37F50}

\keywords{Periodic points; recurrent points; Julia set}


\begin{abstract}
We relate periodic and recurrent points in dendritic Julia sets.
This generalizes well-known results for interval dynamics.
\end{abstract}

\dedicatory{Dedicated to the occasion of A. N. Sharkovskiy's 75th
birthday}

\maketitle

\section{Introduction and the main results}\label{s:intro}

There are two types of results in continuous interval dynamics.
First, these are the results dealing with periods of periodic
points. The main one here is an amazing fact established by A. N.
Sharkovskiy in the beginning of 1960s in \cite{sha64} and describing
the coexistence among periods of periodic points of an interval map.
To state it we need the following definitions (in what follows we
assume the knowledge of few basic notions such as periodic point,
cycle etc; for the sake of completeness we define all other
notions).

\begin{dfn}[Sharkovskiy ordering]\label{d:sharord}
Define the {\it Sharkovskiy ordering\/} for the set $\N$ of positive
integers united with the symbol $2^\infty$ as follows:

$$3\succ 5\succ\dots\succ 2\cdot3\succ 2\cdot5\succ\dots\succ
2^\infty\succ \dots \succ 4\succ 2\succ 1$$

\noindent Denote by $\Sh(k)$ the set of all positive integers $m$
such that $k\succ m$, together with $k$ (except when $k=2^\infty$ in
which case the symbol $2^\infty$ is not included in $\Sh(k)$). Also,
given a map $f$ we denote by $\per(f)$ the set of the periods of all
cycles of $f$ (by the \emph{period} we mean the \emph{least}
period).
\end{dfn}

Now we are ready to state the celebrated Sharkovskiy Theorem \cite{sha64}.

\begin{thm}\label{t:shar1} If $f:[0,1]\to [0,1]$ is a
continuous map, $m\succ n$ and $m\in\per(f)$, then $n\in\per(f)$.
Therefore there exists $k\in\N\cup\{2^\infty\}$ such that
$\per(f)=\Sh(k)$. Conversely, if $k\in\N\cup\{2^\infty\}$ then there
exists a continuous interval map $f$ such that $\per(f)=\Sh(k)$.
\end{thm}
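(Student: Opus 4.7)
The plan is to prove Sharkovskiy's theorem by separating it into its forcing half (if $m \in \per(f)$ and $m \succ n$, then $n \in \per(f)$) and its realization half, and to treat the forcing half through the combinatorics of a Markov graph attached to a cycle. Given any cycle $P = \{p_1 < p_2 < \cdots < p_m\}$ of $f$, I form the $m-1$ closed intervals $I_j = [p_j, p_{j+1}]$ and the directed graph $G(P)$ with an arrow $I_i \to I_j$ whenever $f(I_i) \supseteq I_j$. The first step is a standard intermediate-value argument (the loop lemma): any loop $I_{j_0} \to \cdots \to I_{j_{n-1}} \to I_{j_0}$ of length $n$ in $G(P)$ produces a point $z \in I_{j_0}$ with $f^n(z)=z$ whose $f$-itinerary matches the loop, and a mild non-degeneracy condition on the loop (it is not a shorter loop repeated, and the itinerary does not collapse to an endpoint of $P$) forces $z$ to have exact period $n$.

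The heart of the forcing direction is a combinatorial analysis of $G(P)$ in the case when $m$ is the least odd integer $\geq 3$ in $\per(f)$. The main structural step (\v{S}tefan's lemma) is to show that one can choose such a cycle and relabel it as a zig-zag orbit whose Markov graph contains a distinguished flower-shaped subgraph: one $m$-loop $I_{m-1} \to I_{m-2} \to \cdots \to I_1 \to I_{m-1}$ together with a self-loop $I_{m-1} \to I_{m-1}$ and connecting edges that allow one to splice the self-loop into the long loop an arbitrary number of times. From this flower every loop length $n$ with $m \succ n$ (all even $n$, and all odd $n > m$) is directly visible, and the loop lemma then delivers cycles of each such period. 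The case $m = 2^k \cdot q$ with $q$ odd and $q \geq 3$ reduces to the preceding one by applying the same analysis to $f^{2^k}$ on an invariant interval containing the cycle, and the purely powers-of-two part of the order is handled by the inductive doubling observation that any cycle of period $2^\ell$ with $\ell \geq 1$ forces a cycle of period $2^{\ell-1}$. I expect the \v{S}tefan structural lemma to be the main obstacle, since its proof is a delicate case analysis showing that any deviation from the zig-zag shape manufactures a strictly smaller odd period, contradicting the minimality of $m$.

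For the realization half, the plan is to exhibit explicit examples. A convenient family is the truncated tent maps $T_s$ obtained by flattening the standard tent $x \mapsto s \cdot \min(x, 1-x)$ just above its fixed point for $s \in [1,2]$: as $s$ increases, $\per(T_s)$ sweeps monotonically through each $\Sh(k)$ for $k$ running through $\N$ in Sharkovskiy order, with each $k$ realized on a parameter interval. The exceptional case $k = 2^\infty$ is achieved by a classical Feigenbaum-type limit of the period-doubling construction on nested invariant intervals. Once the Markov-graph machinery of the forcing direction is in place, checking that each of these specific maps has exactly the claimed period set is routine.
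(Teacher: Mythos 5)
The paper offers no proof of Theorem~\ref{t:shar1} to compare yours against: it is quoted as Sharkovskiy's classical theorem with a citation to \cite{sha64}, and it serves only as background and motivation for the new results (Theorems~\ref{t:recdend} and~\ref{t:limdend}), whose proofs use entirely different machinery (fixed-point lemmas for dendrite maps and laminations). Judged on its own, your outline is the standard modern proof of the Sharkovskiy theorem: the loop lemma for the Markov graph of a cycle, \v{S}tefan's zig-zag cycle for the least odd period $\geq 3$, reduction of the case $m=2^k q$ ($q$ odd, $q\ge 3$) to the odd case, the doubling argument for powers of two, and truncated tent maps together with an infinitely renormalizable (Feigenbaum-type) example for the realization half. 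This is a sound and well-trodden plan, and you correctly identify \v{S}tefan's structural lemma as the main combinatorial burden.

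Two places are glibber than the actual argument. First, in the reduction for $m=2^k q$: a point whose $f^{2^k}$-period is $s$ has $f$-period $2^a s$ for some $0\le a\le k$, and $a=k$ is guaranteed only when $s$ is even; so ``applying the same analysis to $f^{2^k}$'' does not directly produce all the required $f$-periods. The usual repair is an induction (e.g.\ via $f^2$), using the fact that any stray period $2^a q'$ with $a<k$ and $q'$ odd that appears is itself higher in the Sharkovskiy order than $m$, so it can be fed back into the forcing argument; your sketch should acknowledge this bookkeeping. Second, in the realization half, the assertion that $\per(T_s)$ sweeps monotonically through the sets $\Sh(k)$ needs justification (typically one truncates at the level of the fixed point of the tent map and argues that the periods present are exactly those forced by the cycles lying below the plateau, using the forcing direction already proved), and the $2^\infty$ example must be verified to have periodic points of all powers of two and no others. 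These are standard but nontrivial details, and they are where the remaining work lies.
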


One can safely say that Theorem~\ref{t:shar1} started
\emph{combinatorial one-dimensional dynamics}. Papers in this field
either seek to specify the coexistence of periods of cycles for
interval maps (e.g., such are papers on the so-called ``rotation
theory for interval maps'', see \cite{blo95a, bm97}), or attempt to
extend a version of the result onto other one-dimensional maps, such
as maps of ``graphs'', i.e. of \emph{compact one-dimensional
branched manifolds} (see \cite{alm00} where the main topics in
one-dimensional dynamics are nicely covered and an extensive list of
references is provided).

Results of the second type deal with all limit sets rather than only
periodic orbits. This direction has also been initiated by
Sharkovskiy, who studied maps of the interval from this perspective
as well. Still, these developments seem to be less well-known. To
state one of Sharkovskiy's results in this area (the one which we
will generalize in this paper), we need the following definition.

\begin{dfn}[Limit sets and recurrent points]\label{d:limrec}
Suppose that $g:X\to X$ is a continuous map of a compact metric
space $X$ to itself. Given a point $x\in X$, the sequence of points
$x, g(x),\dots$ is called the \emph{orbit} of $x$. The set $\om(x)$
of all limit points of the orbit of $x$ is said to be the
\emph{limit set of $x$}; a point of a limit set is often called
simply a \emph{limit point}. A point which belongs to its own limit
set is said to be \emph{recurrent}.
\end{dfn}

The next definition is a little less standard.

\begin{dfn}[Center of a dynamical system]\label{d:center}
Suppose that $g:X\to X$ is a continuous map of a compact metric
space $X$. The \emph{center} of a dynamical system $g$ is the
closure of the set of all its recurrent points. Equivalently, the
\emph{center} of $g$ can be defined as the smallest invariant closed
set $C_g$ such that for any invariant probability measure
$\mu$ we have that $\mu(C_g)=1$.
\end{dfn}

The most obvious example of a recurrent point is a periodic point;
in this case the recurrence manifests itself in the most transparent
way. Thus, the center of a map must contain the closure of all
periodic points of the map. On the other hand, the opposite
inclusion fails already in the case of irrational circle rotations.
Thus, it is natural to ask in various cases how the set of all
periodic points is related to the set of recurrent points (and thus
to the center of a dynamical system). More generally, one can ask
how the limit sets of all points are related to the set of periodic
points of a map.

These problems have been considered by A. N. Sharkovskiy in the 1960s
when a variety of results were obtained (see, e.g., \cite{sha64a,
sha66, sha66a, sha67, sha68}); the scope of this paper does not
allow us to go into a detailed description of this series of papers
which, in our view, laid the foundation of the \emph{one-dimensional
topological dynamics}. Rather we concentrate upon the problems,
described in the previous paragraph, and the way they were addressed
in \cite{sha64a} where the following theorem was proven.

\begin{thm}\label{t:shar2}
The center of an interval map coincides with the closure of the set
of all periodic points of the map.
\end{thm}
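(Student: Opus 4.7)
Let $P$ denote the set of all periodic points of $f$. The inclusion $\ol P\subseteq C_f$ is immediate, since every periodic point is recurrent and $C_f$ is closed.

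For the reverse inclusion $C_f\subseteq\ol P$, the cleanest reduction is to the stronger structural statement that $\om(y)\subseteq\ol P$ for every $y\in[0,1]$: granted this, any recurrent $x$ satisfies $x\in\om(x)\subseteq\ol P$ and the theorem follows at once.

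To prove $\om(y)\subseteq\ol P$, I would argue by contradiction. Suppose $z\in\om(y)\sm\ol P$ and choose a closed interval $V\subset[0,1]$ with $z$ in its interior and $V\cap\ol P=\0$. Since $f^n$ has no fixed point in $V$ for any $n\ge 1$, the continuous function $h_n(t):=f^n(t)-t$ has a constant nonzero sign $\si_n\in\{+,-\}$ on the connected set $V$. The return set $R=\{k\ge 1:f^k(y)\in V\}$ is infinite, with $f^k(y)\to z$ along $R$. If some positive integer $m$ occurs infinitely often as a difference $k_j-k_i$ with $k_i,k_j\in R$ (selecting such pairs with $f^{k_i}(y),f^{k_j}(y)\to z$ is possible by a diagonal argument), then applying continuity to the identity $f^m(f^{k_i}(y))=f^{k_j}(y)$ and passing to the limit yields $f^m(z)=z$, contradicting $z\notin P$.

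The main obstacle is the case of sparse returns, in which every positive integer appears only finitely often among the differences of elements of $R$. In this situation the pigeonhole argument breaks down, and one must instead exploit the sign analysis: for carefully chosen pairs of return times, the constancy of $\si_n$ on $V$ combined with the observed values of $f^{k_i}(y)$ forces $h_n$ (for some $n$ built combinatorially from sums and differences of return times) to change sign across a sub-interval of $V$, producing a zero of $h_n$ inside $V$ via the Intermediate Value Theorem and again contradicting $V\cap P=\0$. Executing this sign-chasing argument in the sparse-return setting is the technical heart of Sharkovskiy's original proof.
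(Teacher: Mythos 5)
There is a genuine gap, and it appears at the very first step. Your reduction to the statement that $\om(y)\subseteq\ol P$ for \emph{every} $y\in[0,1]$ is a reduction to a false statement: that assertion is precisely Theorem~\ref{t:nitec}, which holds for piecewise-monotone maps but, as the paper points out, fails for general continuous interval maps by examples of Sharkovskiy. Consequently no argument can extract a contradiction from the hypotheses ``$z\in\om(y)$, $z\in V$, $V\cap\ol P=\0$'' alone---such $y$, $z$, $V$ genuinely exist---and your outline never uses the one hypothesis that makes the theorem true, namely that the point is \emph{recurrent}, i.e.\ $z\in\om(z)$, so that the orbit of $z$ itself returns to $V$. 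A second, independent problem is that even inside your framework the proof is not carried out: the pigeonhole step only covers the case where some difference of return times occurs infinitely often, and you explicitly defer the complementary ``sparse returns'' case, describing the needed sign-chasing as ``the technical heart'' without executing it. So the decisive case is missing, and as framed it cannot be completed without reinstating recurrence.

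For comparison, the paper does not reprove Sharkovskiy's theorem by an interval-specific argument; it deduces it (in fact for all finite trees) from Theorem~\ref{t:recdend}: every recurrent point of arc type lies in the closure of the periodic points, and on a tree every recurrent non-periodic point is of arc type, so the set of recurrent points, and hence its closure (the center), lies in $\ol P$. The proof of Theorem~\ref{t:recdend} rests on Lemma~\ref{l:onesidelim}: if a recurrent point $b$ were approached by its own orbit through an arc $(d,b)$ whose surrounding component $B$ contains no periodic points, then the fixed-point lemmas (Lemma~\ref{l:fxpt} and Lemma~\ref{l:fixinarc}, the dendrite analogues of your intermediate-value sign argument) force every return of a point of $(d,b)$ to move monotonically toward $b$ and show that $b$ itself can never re-enter $(d,b)$, contradicting recurrence. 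Recurrence of $b$ is used essentially there; that is exactly the ingredient your proposal discards at the outset.
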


One way of further developing one-dimensional topological dynamics
was to show that results which are somewhat stronger than
Sharkovskiy's results can be obtained if the maps are taken from a
more narrow class than the class of continuous interval maps. If one
stays within the framework of continuous interval maps  (i.e.,
considers neither discontinuous nor smooth interval maps), then the
most natural such class seems to be that of \emph{piecewise-monotone
continuous} interval maps; by \emph{piecewise-monotone continuous}
interval maps we mean continuous interval maps for which the
interval can be partitioned, by finitely many points, into finitely
many adjacent intervals on each of which the map is (non-strictly)
monotone. The following result is due to Z. Nitecki \cite{nit80}.

\begin{thm}\label{t:nitec}
Suppose that $f$ is a piecewise-monotone continuous interval map.
Then the limit set $\om(x)$ of any point is contained in the closure
of the set of all periodic points of $f$.
\end{thm}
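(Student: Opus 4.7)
The plan is to reduce the problem to a structural classification of infinite $\om$-limit sets for piecewise-monotone continuous interval maps and then to check the desired containment case by case. First, I would observe that an $\om$-limit set is always closed and strongly invariant, so a finite $\om(x)$ is automatically a single periodic cycle and there is nothing to prove; hence I may assume $\om(x)$ is infinite.

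The key auxiliary result to establish is the following dichotomy, which exploits piecewise monotonicity (only finitely many turning points) in an essential way: every infinite $\om$-limit set $\om(x)$ of $f$ is of one of two types. \emph{Type~I} (solenoidal): $\om(x)=\bigcap_{n\ge 0}J_n$, where each $J_n$ is a union of $p_n$ pairwise disjoint closed intervals cyclically permuted by $f$, with $p_n\mid p_{n+1}$ and $p_n\to\infty$. \emph{Type~II}: $\om(x)$ is contained in a cyclic chain of closed intervals $I,\,f(I),\dots,f^{k-1}(I)$ with $f^k(I)=I$ on which $f^k$ acts topologically transitively and admits a horseshoe, forcing periodic points to be dense in the chain.

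Given this dichotomy, the theorem follows quickly. In Type~I, for each $n$ every component $K$ of $J_n$ satisfies $f^{p_n}(K)\supseteq K$ (since the family is cyclically permuted back to itself), so the intermediate value theorem yields a fixed point of $f^{p_n}$ in $K$, i.e.\ a periodic point of period dividing $p_n$. Because the maximal component diameters of $J_n$ tend to $0$ as $n\to\infty$ (otherwise $\om(x)$ would contain a non-degenerate interval, contradicting $\om(x)$ being the exact intersection of a strictly refining solenoidal family), every $y\in\om(x)$ is a limit of such periodic points, and hence $y\in\overline{\per(f)}$. In Type~II, density of periodic points in the transitive cyclic chain is built directly into the horseshoe structure and yields $\om(x)\subseteq\overline{\per(f)}$ at once.

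The main obstacle is establishing the dichotomy itself. The construction of the nested solenoidal family $\{J_n\}$ proceeds inductively: starting from a cycle of intervals containing $\om(x)$, one refines using the turning points of successive iterates of $f$; at each step either a strictly finer cycle of periodic intervals is produced, extending the nested family, or the attempted refinement fails because two would-be disjoint components overlap — which is precisely the horseshoe signature forcing Type~II. The finiteness of the set of turning points is what guarantees that this iterative procedure either terminates in a Type~II configuration or produces an infinite nested sequence of Type~I; in a general continuous interval map (without piecewise monotonicity) such a clean trichotomy can fail, which is why Theorem~\ref{t:shar2} only gives closure of periodic points inside the center rather than inside every $\om(x)$. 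Once the dichotomy is in place, the rest of the argument is a routine packaging of the two cases above.
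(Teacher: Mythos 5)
You should first note that the paper does not prove Theorem~\ref{t:nitec} at all: it is quoted as background from Nitecki \cite{nit80}, so there is no internal proof to compare yours against, and your sketch has to stand on its own. Judged that way, it has genuine gaps, and they sit exactly where piecewise monotonicity must do real work. The first gap is the dichotomy itself: Type~II is false as stated. An infinite, non-solenoidal $\om(x)$ need not lie in a cycle of intervals on which some $f^k$ is topologically transitive. Take a unimodal (hence piecewise monotone) map with an attracting periodic orbit together with a nowhere dense expanding invariant Cantor set, and let $x$ have an infinite limit set inside that Cantor set: any cyclic chain of intervals containing $\om(x)$ meets the basin of the attractor, so no iterate is transitive on it, and periodic points are not dense in the chain (indeed $\ol{\per(f)}$ is nowhere dense there). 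The correct structural statement (Sharkovskiy/Blokh spectral decomposition) is that such an $\om(x)$ lies in a maximal limit set (a ``basic set'') in which periodic points are dense --- dense in the set, not in a surrounding chain --- and that statement is true for \emph{all} continuous interval maps, so it cannot by itself be where the hypothesis enters. Your inductive ``refine-or-horseshoe'' paragraph is only a heuristic and establishes neither version.

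The second gap is in Type~I. Your argument needs the maximal component diameters of $J_n$ to tend to $0$, and your justification --- ``otherwise $\om(x)$ would contain a non-degenerate interval'' --- silently assumes $\om(x)=\bigcap_n J_n$, which is not established and is false in general: one only has $\om(x)\subset\bigcap_n J_n$. Worse, continuous piecewise monotone maps can have wandering intervals (blow up the critical orbit of an infinitely renormalizable unimodal map), and then the naturally constructed solenoidal cycles have non-degenerate intersection components, so shrinking is not automatic; one must either show the cycles can be re-chosen so as to pinch down at the points of $\om(x)$, or show each point of $\om(x)$ is approximated by periodic points from the side from which the orbit approaches it. That one-sided analysis is precisely the substance of Nitecki's theorem and is where finiteness of the set of turning points is genuinely used; with infinitely many turning points Sharkovskiy's examples defeat the statement even though Theorem~\ref{t:shar2} still holds. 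As written, your proposal reduces the theorem to two structural claims, one false as stated and the other unproved, so it does not yet constitute a proof.
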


Theorems ~\ref{t:shar2} and ~\ref{t:nitec} are clearly related:
Theorem~\ref{t:nitec} implies Theorem~\ref{t:shar2} in the
piecewise-monotone case, yet Theorem~\ref{t:shar2} holds for all
continuous interval maps. Examples constructed by Sharkovskiy show
that Theorem~\ref{t:nitec} does not hold for all continuous interval
maps. Also, we would like to mention here that in the above
statements of Theorems ~\ref{t:shar2} and ~\ref{t:nitec} we omitted
parts of the original formulations which are not directly related to
the present work.

In this paper we extend Theorems~\ref{t:nitec} and
Theorem~\ref{t:shar2} onto continuous maps of more complicated
topological spaces. As mentioned above, there are quite a few papers
in which dynamics was extended from the interval onto more
complicated but still one-dimensional topological spaces, within
both one-dimensional combinatorial dynamics and one-dimensional
topological dynamics; for the most part this was done for continuous
maps of ``graphs''(see, e.g., \cite{alm00, blo80xx}).

However here we generalize Theorems~\ref{t:shar2} and ~\ref{t:nitec}
onto one-dimensional spaces more complicated than ``graphs''; so far
few results similar to those from one-dimensional dynamics were
obtained for such topological spaces (see, e.g. \cite{mt89, aeo07}).
First we need the following definition.

\begin{dfn}[Dendrites, their points and subarcs]\label{d:dendr}
A \emph{dendrite} is a non-degenerated locally connected continuum which does not
contain Jordan curves. A point $x$ of a dendrite $X$ is called an
\emph{endpoint} of $X$ if $X\sm \{x\}$ is connected, a
\emph{cutpoint} of $X$ if $X\sm \{x\}$ is disconnected and a
\emph{branchpoint} of $X$ if $X\sm \{x\}$ has more than two
components. For any two points $a, b\in X$ there exists a unique
closed arc in $X$ with endpoints $a$ and $b$ denoted $[a, b]$; the
notation $(a, b), (a, b]$ and $[a, b)$ is analogous to similar
notation in the interval case.
\end{dfn}

As dendrites are much more complicated topological spaces than the
interval or even a \emph{tree} (i.e. a dendrite with finitely many
branchpoints), it is natural to adjust some of the definitions for
them so that tools of one-dimensional dynamics will apply.


\begin{dfn}\label{d:arctype}
Let $f:X\to X$ be a continuous self-mapping of a dendrite $X$.
Suppose that for points $x, b\in X$ there exists an arc $[a,
b]\subset X$ and a sequence of images $f^{n_k}(x)\in (a, b)$ of $x$
which converge to $b$.  Then we say that $b$ is a limit point of $x$
\emph{of arc type}; denote the set of all limit points of $x$ of arc
type by $\om^a(x)$. If $b$ is a limit point of $b$ of arc type then
we say that $b$ is a recurrent point \emph{of arc type}.

If $y$ is a limit point of $x$ which is not of arc type, then we
call $y$ a limit point of $x$ \emph{of non-separating type}; denote
the set of all limit points of $x$ of non-separating type by
$\omega^{ns}(x)$. If $y$ is a limit point of $y$ of \emph{
non-separating type}, we call $y$ a recurrent point of
\emph{non-separating} type.
\end{dfn}

By definition, $\om^a(x)\cup \om^{ns}(x)=\om(x)$. Also, observe that
if $b$ is a limit point of $x$  of arc type then infinitely many
points $f^{n_k}(x)$ are cutpoints of $X$. In the case of limit
points of arc type the convergence resembles that which takes place
in the interval case. It is then no wonder that limit points of arc
type and recurrent points of arc type play for dendrites a role
similar to that of limit points and recurrent points on the
interval.

\begin{thm}\label{t:recdend}
Let $f:X\to X$ be a continuous self-mapping of a dendrite $X$. Then
all recurrent points of arc type belong to the closure of the set of
all periodic points.
\end{thm}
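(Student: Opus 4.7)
\emph{Setup.} I fix an arbitrary $\varepsilon>0$ and aim to produce a periodic point of $f$ inside $B(b,\varepsilon)$; as $\varepsilon$ is arbitrary, this gives $b\in\overline{\per(f)}$. Let $[a,b]$ and $\{n_k\}$ be the arc and sequence supplied by the arc-type hypothesis, so $p_k:=f^{n_k}(b)\in(a,b)$ and $p_k\to b$. Choosing a homeomorphism $\phi:[0,1]\to[a,b]$ with $\phi(1)=b$, its uniform continuity yields $\delta>0$ with $\phi([1-\delta,1])\subset B(b,\varepsilon)$. After passing to a subsequence I may assume the $p_k$ approach $b$ monotonically along the arc. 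Pick $k$ large enough that $p_k\in\phi([1-\delta,1])$, so $J:=[p_k,b]\subset B(b,\varepsilon)$, and set $N:=n_{k+1}-n_k$, so that $f^N(p_k)=p_{k+1}$ lies strictly between $p_k$ and $b$ on $J$.

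\emph{Retraction and fixed point.} Using the canonical dendrite retraction $\pi:X\to J$, I form $g:=\pi\circ f^N|_J:J\to J$, a continuous self-map of the arc $J$ (topologically an interval). Parametrizing $J$ so that $p_k$ is its left endpoint and $b$ is its right endpoint, I have $g(p_k)=p_{k+1}$ strictly to the right of $p_k$ in the arc order, while $g(b)\in J$ is automatically to the left of or equal to $b$. A standard supremum argument---letting $x^*$ be the supremum, in the arc order on $J$, of those $x\in J$ with $g(x)\geq x$ in that order---produces a point $x^*\in J$ with $g(x^*)=x^*$, i.e., $\pi(f^N(x^*))=x^*$.

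\emph{Lifting and main obstacle.} The crux, and the principal technical difficulty, is to upgrade $x^*$ from a fixed point of the retracted map $g$ to an actual fixed point of $f^N$. If $f^N(x^*)\neq x^*$ while $\pi(f^N(x^*))=x^*$, then $x^*$ must be a branchpoint of $X$ and the image $f^N(x^*)$ escapes into a side branch at $x^*$ disjoint from $J\setminus\{x^*\}$. I would rule this out using the arc-type hypothesis: intuitively, since the orbit of $b$ returns to $b$ through interior points of the arc $[a,b]$ rather than through side branches, the candidate fixed points $x^*_k$ produced as $k$ varies (with $J_k=[p_k,b]$ nesting down to $\{b\}$) cannot persistently escape into side branches. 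A diagonal or subsequence argument over these nested arcs should yield infinitely many $k$ for which $x^*_k$ genuinely satisfies $f^{N_k}(x^*_k)=x^*_k$; an alternative is to engineer $N$ as a suitable first-return time so that a covering relation $f^N(J')\supseteq J'$ holds for some sub-arc $J'\subseteq J$, allowing a dendritic version of the interval covering lemma to produce a true fixed point directly. Either way, once such $x^*$ is obtained, it is periodic for $f$ (with period dividing $N$) and lies in $B(b,\varepsilon)$, completing the proof.
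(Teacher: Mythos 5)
Your setup and the retraction step are fine, but they only reproduce the easy, interval-like part of the argument; the whole difficulty of the theorem sits in the step you explicitly leave open, and neither of your suggested remedies closes it. From $\pi(f^N(x^*))=x^*$ one cannot conclude $f^N(x^*)=x^*$, and the hoped-for ``dendritic version of the interval covering lemma'' is false as stated: on a triod with center $c$ and arms $[c,a_1]$, $[c,a_2]$, $[c,a_3]$, take $I=[a_1,a_2]$ and a map sending the quarter of $I$ next to $a_1$ onto $[c,a_2]$ and the quarter next to $a_2$ onto $[c,a_1]$ (so the ends swap), while the middle half of $I$ makes an excursion into the third arm; then the map is continuous, its image of $I$ contains $I$, yet there is no fixed point in $I$ --- the would-be fixed point escapes into the side branch, which is exactly the phenomenon you need to exclude. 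The ``diagonal/subsequence'' idea is likewise only a hope: in the situation that must be ruled out ($b$ recurrent of arc type but \emph{not} a limit of periodic points) \emph{every} retraction-fixed point $x^*_k$ near $b$ necessarily fails to be a genuine fixed point of $f^{N_k}$, and nothing in the arc-type hypothesis, which concerns the orbit of $b$ alone, controls where $f^{N_k}$ sends the other points of $J_k$. So the lifting step is a genuine gap, not a technicality.

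The paper handles precisely this obstacle with different machinery. It argues by contradiction: if $b$ is not a limit of periodic points, choose $d$ so close to $b$ that the component $B$ of $X\setminus\{d,b\}$ containing $(d,b)$ contains no periodic points. Then, using the boundary-scrambling fixed point theorem for dendrites (Lemma~\ref{l:fxpt}, from \cite{bfmot10}) together with Lemma~\ref{l:fixinarc}, it proves Lemma~\ref{l:onesidelim}: under that assumption, every return of a point of $(d,b)$ to $(d,b)$ moves it strictly toward $b$ in the arc order, and $b$ itself can never map into $(d,b)$ at all. The latter contradicts recurrence of arc type, which forces $f^{n_k}(b)\in(d,b)$ for infinitely many $k$. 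The key tool there is a fixed point theorem that detects fixed points inside subdendrites by controlling how the map moves their boundary points (the scrambling hypothesis), rather than a covering relation along an arc; some such control of escape through side branches is exactly what your argument lacks, and without it the passage from a fixed point of $\pi\circ f^N$ to a periodic point of $f$ cannot be completed.
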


Clearly, Theorem~\ref{t:recdend} implies Theorem~\ref{t:shar2} in
the case when $X$ is a finite tree. Indeed, if $X$ is a finite tree then
all its recurrent non-periodic points are of arc type. Hence in that case all recurrent points
belong to the closure of the set of all periodic points. Since the center of $f$
is the closure of all its recurrent points, Theorem~\ref{t:shar2} follows.
Thus, Theorem~\ref{t:recdend} can be viewed as a
generalization of Theorem~\ref{t:shar2} for dendrites. The
corresponding generalization of Theorem~\ref{t:nitec} requires
considering a more narrow class of maps of dendrites; on the other
hand, the results in that case are more precise as we now consider
periodic \emph{cutpoints} rather than just periodic points. As such
class, we choose \emph{topological polynomials on their dendritic
Julia sets}; thus, our research is triggered not only by the desire
to further study one-dimensional dynamics, but also by the interest
to complex, in particular polynomial dynamics (so that the obtained
results can be considered as a part of both one-dimensional and
complex dynamics).

Indeed, it is well-known that a polynomial $P$ on its locally
connected Julia set can be described using the appropriate
\emph{lamination $\sim$}, i.e. specific equivalence relation $\sim$
on the circle $\uc$ (notice that $\sim$-classes in this case are
always finite). The corresponding quotient space $J_\sim$ of $\uc$
is then called \emph{topological Julia set} while the map
$f_\sim:J_\sim\to J_\sim$ induced by $\si_d=z^d:\uc\to \uc$ (here
$d\ge 2$ is the degree of $P$) is called \emph{topological polynomial}.
Since the original Julia set $J_P$ of $P$ is assumed here to be
locally connected, it follows that $P|_{J_P}$ and $f_\sim|_{J_\sim}$
are topologically conjugate.

Even though there are, of course, locally connected Julia sets which
are not dendrites, results on dendritic case as a rule contain the
most substantial steps of the proofs; then these proofs often can be
extended onto all Julia sets modulo overcoming technical
difficulties. Therefore we believe that studying dendritic Julia
sets is a proper way of developing topological dynamics of
polynomials on their locally connected Julia sets (in fact, later on
we plan to extend our results onto the general case of locally
connected Julia sets). We need the following definition.

\begin{dfn}[Persistent cutpoints]\label{d:percut}
A point $x$ is a \emph{persistent cutpoint} of $J_\sim$ if all its
images are cutpoints of $J_\sim$.
\end{dfn}

This is not restrictive as the only cutpoints which are not
persistent are preimages of (some) critical points of $f_\sim$; in
what follows we talk about limit points of \emph{persistent
cutpoints} rather than all cutpoints. Observe also that in
Theorem~\ref{t:limdend} we talk about the entire limit set of $x$
and not only the set $\om^a(x)$ of all limit points of $x$ of arc
type.

\begin{thm}\label{t:limdend}
Let $f_\sim$ be a topological polynomial with dendritic Julia set
$J_\sim$, let $X\subset J_\sim$ be an invariant dendrite and let
$x\in X$ be a persistent cutpoint of $X$. Then $\om(x)$ is contained
in the closure of the set of all periodic cutpoints of $f_\sim|_X$.
In particular, the limit set of any persistent cutpoint $x$ of
$J_\sim$ is contained in the closure $\ol{\pc_\sim}$ of the set
$\pc_\sim$ of all periodic cutpoints of $f_\sim$, and all recurrent
persistent cutpoints belong to $\ol{\pc_\sim}$.
\end{thm}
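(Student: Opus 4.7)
The strategy is to show that every $y \in \om(x)$ is approximated by periodic cutpoints of $f_\sim|_X$. The persistent cutpoint hypothesis is essential: it guarantees that the entire forward orbit $\{f_\sim^n(x)\}$ consists of cutpoints of $X$, so $y$ is automatically a limit of cutpoints $c_k := f_\sim^{n_k}(x) \to y$. The remaining task is to upgrade the $c_k$ to \emph{periodic} cutpoints, and I split this according to the dichotomy of Definition~\ref{d:arctype}.

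First I would treat arc-type limit points. If $y \in \om^a(x)$, there is an arc $[a, y] \subset X$ and a subsequence $c_k \in (a, y)$ with $c_k \to y$; passing to a further subsequence I may assume the $c_k$ approach $y$ monotonically along $[a, y]$. Then the iterate $g_k := f_\sim^{n_{k+1} - n_k}$ sends the cutpoint $c_k$ to the cutpoint $c_{k+1}$, which lies strictly between $c_k$ and $y$. A standard arc-covering argument, analogous to the one used in the proof of Theorem~\ref{t:recdend}, produces a sub-arc $I \subset [c_k, y]$ with $g_k^\ell(I) \supset I$ for some $\ell$, yielding a periodic point of $f_\sim|_X$ arbitrarily close to $y$; the persistent cutpoint hypothesis on $x$, together with control over which points on the arc are cutpoints of $X$, allows this approximating periodic point to be chosen as a cutpoint. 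This effectively bootstraps Theorem~\ref{t:recdend} from recurrent arc-type points to all arc-type limit points.

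The harder case is $y \in \om^{ns}(x)$, where the arc-based approach fails and the polynomial structure must be invoked. Passing to the lamination $\sim$ on $\uc$ describing $f_\sim$, the cutpoints $c_k$ correspond to $\sim$-classes $A_k$ with $|A_k| \ge 2$ accumulating in the Hausdorff sense on the $\sim$-class $A$ of $y$. Since $\si_d = z^d$ is locally expanding on $\uc$, a suitable iterate $\si_d^{m_k}$ can be arranged to map small arcs of $\uc$ near points of $A_k$ strictly across themselves; the resulting periodic orbits of $\si_d$ group into a periodic $\sim$-class $P_k$ with $|P_k| \ge 2$, which projects to a periodic cutpoint of $f_\sim$ close to $y$. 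The invariance of $X$ under $f_\sim$ together with the fact that the approximating orbit of $x$ lies in $X$ then forces $P_k$ to lie in $X$. The main obstacle I expect is exactly this last step: guaranteeing rigorously that the periodic cutpoints produced via the circle lamination land in the invariant dendrite $X$ and remain cutpoints of $X$ (not merely of $J_\sim$), which demands careful analysis of how $X$ sits inside $J_\sim$ near a non-separating limit of cutpoints.
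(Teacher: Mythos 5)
There is a genuine gap in both halves of your argument. For the arc-type case, the ``standard arc-covering argument analogous to Theorem~\ref{t:recdend}'' you invoke does not exist. Theorem~\ref{t:recdend} concerns \emph{recurrent} points, where the contradiction comes from $b$ itself re-entering $(d,b)$; for a mere arc-type limit point $b$ of some other point $x$, the one-sided analysis (the paper's Lemma~\ref{l:onesidelim}) only shows that the returns of the orbit of $x$ to $(d,b)$ creep monotonically toward $b$, and nothing in that picture forces a covering relation $g_k^{\ell}(I)\supset I$ on a subarc of the dendrite: the intermediate images are free to wander into other branches. More fundamentally, the statement you are trying to prove by soft arc arguments is \emph{false} for general continuous maps -- already on the interval, Sharkovskiy's examples show that Theorem~\ref{t:nitec} fails without piecewise monotonicity -- so any correct proof must use the finiteness of the critical points of $f_\sim$ in an essential way. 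This is exactly what the paper's Lemma~\ref{l:onesidelim1} does: assuming no periodic cutpoints of $f|_X$ near $b$, it isolates the finitely many critical points that ever enter the region $B$, builds an invariant continuum $Y'$ whose images avoid $B$ (ruling out invariant continua inside $\ol{B}$ via Theorem~\ref{t:infcutper}), and then uses the dynamical-core Lemma~\ref{l:dyco} to show that the cutpoint $f^{n_{\gamma+1}}(x)$ is eventually absorbed into a continuum disjoint from $B$, contradicting convergence to $b$. None of this machinery is present in, or replaceable by, your sketch; your ``control over which points on the arc are cutpoints'' is not a substitute.

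For the non-separating case your plan leaves open precisely the decisive step, as you yourself note: a periodic orbit of $\si_d$ produced by expansion near the classes $A_k$ need not form a nondegenerate periodic $\sim$-class, need not lie over $X$, and need not project to a cutpoint of $X$ near $y$ (it may well project to an endpoint of $J_\sim$), so no periodic cutpoint is actually obtained. The paper circumvents this entirely by a different reduction: Lemma~\ref{l:limend} shows, using the finiteness of critical leaves and the expansion of $\si_d$, that $\om^a(x)$ is \emph{dense} in $\om(x)$; since $\ol{\pc(X)}$ is closed, the non-separating limit points then come for free from the arc-type case, and the theorem follows from Lemmas~\ref{l:onesidelim1} and~\ref{l:limend}. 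So while your decomposition into $\om^a(x)$ and $\om^{ns}(x)$ matches the paper's, both halves of your argument are missing their key ingredient: the critical-point/dynamical-core argument for arc-type limits, and the density lemma (or a genuine resolution of the step you flagged) for non-separating limits.
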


It is easy to see that Theorem~\ref{t:limdend} implies
Theorem~\ref{t:recdend} for topological polynomials with dendritic
Julia sets. Indeed, a limit point of arc type (in particular, a
recurrent point of arc type) belongs to the appropriate limit set
which is the limit set of a persistent cutpoint. Then by
Theorem~\ref{t:limdend} a limit (recurrent) point of arc type
belongs to the closure of all periodic cutpoints. This statement is
even stronger then that of Theorem~\ref{t:recdend}.

Theorem~\ref{t:limdend} allows us to make conclusions about
invariant measures of $f_\sim$. Namely, we prove
Corollary~\ref{c:meas}; in it when we say that a probability measure
$\mu$ is \emph{supported} on a set $A$ we mean that $\mu(A)=1$.

\begin{cor}\label{c:meas}
Suppose that $\mu$ is a probability invariant measure of $f_\sim$.
Then it can be represented as the convex sum of two probability
invariant measures $\mu_e$ (supported on the set of all
endpoints of $J_\sim$) and $\mu_c$ (supported on the set of
cutpoints of $J_\sim$ intersected with the closure of the set of all
periodic cutpoints).
\end{cor}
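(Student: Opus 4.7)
The plan is to split $\mu$ according to the partition of $J_\sim$ into its set of endpoints $E$ and its set of cutpoints $C$, and then renormalize the pieces to produce $\mu_e$ and $\mu_c$. The heart of the argument is to show that $E$ (equivalently $C$) is invariant modulo $\mu$-null sets, i.e.\ $\mu(E\,\triangle\, f_\sim^{-1}(E))=0$.

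For this I use the $f_\sim$-invariance of $\mu$: decomposing $\mu(E)=\mu(E\cap f_\sim^{-1}(E))+\mu(E\cap f_\sim^{-1}(C))$ and $\mu(f_\sim^{-1}(E))=\mu(E\cap f_\sim^{-1}(E))+\mu(C\cap f_\sim^{-1}(E))$ and equating, the claim reduces to $\mu(C\cap f_\sim^{-1}(E))=0$. Now $C\cap f_\sim^{-1}(E)$ consists of cutpoints whose immediate image is an endpoint, hence is contained in the set of non-persistent cutpoints of Definition~\ref{d:percut}; by the remark following that definition this is a subset of the countable set $N$ of preimages of critical points of $f_\sim$. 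The mass of any invariant measure on a countable set is carried by atoms, which sit on periodic orbits. The main obstacle is thus to rule out periodic cutpoints $x$ with $f_\sim(x)$ an endpoint: at such a point $f_\sim$ would send a neighborhood with at least two local branches (around the cutpoint $x$) onto one with a single branch (around the endpoint $f_\sim(x)$), forcing $x$ to be critical. But in a dendritic Julia set of a topological polynomial critical points cannot be periodic (otherwise a super-attracting cycle with an associated nontrivial Fatou component would obstruct $J_\sim$ from being a dendrite). Hence no such $x$ exists and $\mu(C\cap f_\sim^{-1}(E))=0$.

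With the invariance of $E$ modulo $\mu$ in hand, both $\mu|_E$ and $\mu|_C$ are $f_\sim$-invariant sub-probability measures. Setting $\alpha=\mu(E)\in[0,1]$, the cases $\alpha\in\{0,1\}$ are immediate; otherwise let $\mu_e=\alpha^{-1}\mu|_E$ and $\mu_c=(1-\alpha)^{-1}\mu|_C$, so that $\mu=\alpha\mu_e+(1-\alpha)\mu_c$ decomposes $\mu$ into $f_\sim$-invariant probability measures with $\mu_e$ supported on $E$ and $\mu_c$ supported on $C$.

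Finally, to upgrade the support of $\mu_c$ from $C$ to $C\cap\ol{\pc_\sim}$, I apply Poincar\'e recurrence to conclude that $\mu_c$-a.e.\ point is a recurrent cutpoint. Recurrent persistent cutpoints lie in $\ol{\pc_\sim}$ by Theorem~\ref{t:limdend}; the remaining recurrent cutpoints lie in the countable set $N$, so contribute only through atoms, and atoms of an invariant measure sit on periodic orbits, while any periodic cutpoint is by definition in $\pc_\sim\subseteq\ol{\pc_\sim}$. Hence $\mu_c(C\cap\ol{\pc_\sim})=1$, completing the proof.
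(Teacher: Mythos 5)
Your proposal is correct and follows essentially the same route as the paper: split $\mu$ into an endpoint part and a cutpoint part after observing that cutpoints whose image is an endpoint are non-persistent (critical) and carry no mass unless periodic, which is impossible, and then use Poincar\'e recurrence together with Theorem~\ref{t:limdend} to place $\mu_c$ inside $\ol{\pc_\sim}$. The only cosmetic difference is your justification that critical points of $f_\sim$ cannot be periodic via a super-attracting Fatou component, which in the purely laminational setting is better replaced by the combinatorial remark that images of endpoints (degenerate classes) are endpoints, so a periodic cutpoint can never have an endpoint in its orbit; this does not affect the correctness of the argument.
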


\noindent \textbf{Acknowledgments.} The author would like to thank
L. Oversteegen, R. Ptacek and V. Timorin for useful discussions. He
is also grateful to the referee for useful and thoughtful comments.

\section{Preliminaries}\label{s:prel}

In this section we introduce the notions necessary to obtain the
announced results. We also state some useful lemmas.

\subsection{Laminations}\label{ss:lami}

We begin with laminations introduced by Thurston in \cite{thu85}.
Laminations provide a combinatorial tool which allows us to deal
with polynomial dynamics. We define laminations below, however our
approach is somewhat different from \cite{thu85} (cf.
\cite{blolev02a}).

\begin{dfn}[Laminations as equivalence relations]\label{d:lam}

An equivalence relation $\sim$ on the unit circle $\uc$ is called a
\emph{lamination} if it has the following properties:

\noindent (E1) the graph of $\sim$ is a closed subset in $\uc \times
\uc$;

\noindent (E2) if $t_1\sim t_2\in \uc$ and $t_3\sim t_4\in \uc$, but
$t_2\not \sim t_3$, then the open straight line segments in $\C$
with endpoints $t_1, t_2$ and $t_3, t_4$ are disjoint;

\noindent (E3) each equivalence class of $\sim$ is totally
disconnected.
\end{dfn}

Consider the map $\si_d:\uc\to\uc$ defined by the formula
$\si_d(z)=z^d (d\ge 2)$.

\begin{dfn}[Dynamics and invariant laminations]\label{d:si-inv-lam}
A lamination $\sim$ is called ($\si_d$-){\em invariant} if:

\noindent (D1) $\sim$ is {\em forward invariant}: for a class $g$,
the set $\si_d(g)$ is a class too;

\noindent (D2) $\sim$ is {\em backward invariant}: for a class $g$,
its preimage $\si_d^{-1}(g)=\{x\in \uc: \si_d(x)\in g\}$ splits into
at most $d$ classes;

\noindent (D3) for any $\sim$-class $g$, the map $\si_d: g\to
\si_d(g)$ extends to $\uc$ as an orientation preserving covering map
such that $g$ is the full preimage of $\si_d(g)$ under this covering
map.

\noindent (D4) all $\sim$-classes are finite.

\end{dfn}

Part (D3) of Definition~\ref{d:lam} has an equivalent version. A
{\em (positively oriented) hole $(a, b)$ of a compactum $Q\subset
\uc$} is a component of $\uc\sm Q$ such that moving from $a$ to $b$
inside $(a, b)$ is in the positive direction. Then (D3) is
equivalent to the fact that for a $\sim$-class $g$ either $\si_d(g)$
is a point or for each positively oriented hole $(a, b)$ of $g$ the
positively oriented arc $(\si_d(a), \si_d(b))$ is a positively
oriented hole of $\si_d(g)$.

For a $\si_d$-invariant lamination $\sim$ we consider the
\emph{topological Julia set} $\uc/\sim=J_\sim$ and the
\emph{topological polynomial} $f_\sim:J_\sim\to J_\sim$ induced by
$\si_d$. The quotient map $p_\sim:\uc\to J_\sim$ extends to the
plane with the only non-trivial fibers being the convex hulls of
$\sim$-classes. Using Moore's Theorem one can extend $f_\sim$ to a
branched-covering map $f_\sim:\C\to \C$ of the same degree. The
complement of the unbounded component of $\C\sm J_\sim$ is called
the \emph{filled-in topological Julia set} and is denoted $K_\sim$.
If the lamination $\sim$ is fixed, we may omit $\sim$ from the
notation.

For points $a$, $b\in\uc$, let $\ol{ab}$ be the {\em chord} with
endpoints $a$ and $b$ (if $a=b$, set $\ol{ab}=\{a\}$). For
$A\subset\uc$ let $\ch(A)$ be the \emph{convex hull} of $A$ in $\C$.

\begin{dfn}[Geometric laminations, their leaves and gaps]\label{d:lea}
If $A$ is a $\sim$-class, call an edge $\ol{ab}$ of $\bd(\ch(A))$ a
\emph{leaf}. 
The family of all leaves of $\sim$, denoted by $\lam_\sim$, is
called the \emph{geometric lamination generated by $\sim$}. Denote
the union of all leaves of $\lam_\sim$ by $\lam^+_\sim$. Extend
$\si_d$ (keeping the notation) linearly over all \emph{individual
chords} in $\ol{\disk}$, in particularly over leaves of $\lam_\sim$.
Note, that even though the extended $\si_d$ is not well defined on
the entire disk, it is well defined on every individual chord in the
disk.

The two-point $\sim$-class (and its convex hull) is said to be a
\emph{leaf-class}. The closure of a non-empty component of $\disk\sm
\lam^+_\sim$ is called a \emph{gap} of $\sim$. If $G$ is a gap, we
talk about \emph{edges of $G$}; thus a leaf is either a leaf-class,
or an edge of a gap. If $G$ is a gap or leaf, we call the set
$G'=\uc\cap G$ the \emph{basis of $G$}. A degenerate $\sim$-class is
said to be a \emph{bud} of $\sim$. In what follows for $y\in J_\sim$
we denote by $G_y=\ch(p^{-1}_\sim(y))$ the gap, leaf-class or bud
corresponding to $y$ under $p_\sim$.
\end{dfn}

A gap or leaf $U$ is said to be \emph{preperiodic} if for some minimal
$m$ the set $\si_d^m(U)$ is periodic, $m>0$, and $U, \dots,
\si_d^{m-1}(U)$ are not periodic. Then the number $m$ is called the
\emph{preperiod} of $U$. If $U$ is either periodic or preperiodic,
we will call it \emph{(pre)periodic}. Similarly we treat critical,
precritical and (pre)critical objects.

\subsection{Existence of fixed cutpoints}\label{ss:lamise}

In this subsection we state the results of \cite{bfmot10} concerning
the existence of fixed cutpoints in non-invariant continua (in
particular, non-invariant subcontinua of $J_\sim$). The main results
of \cite{bfmot10} are much more general, however we only need those
of them which apply to topological polynomials with dendritic Julia
sets. We will show how to modify some of the results of
\cite{bfmot10} to our needs. However first we need a few definitions
introduced in \cite{bfmot10}.

\begin{dfn}[Boundary scrambling for dendrites]\label{d:bouscr}
Suppose that $f$ maps a dendrite $D_1$ to a dendrite $D_2\supset
D_1$. Put $E=\ol{D_2\sm D_1}\cap D_1$ (observe that $E$ may be
infinite). If for each \emph{non-fixed} point $e\in E$, $f(e)$ is
contained in a component of $D_2\sm\{e\}$ which intersects $D_1$,
then we say that $f$ has the \emph{boundary scrambling property} or
that it \emph{scrambles the boundary}. Observe that if $D_1$
\emph{is} invariant then $f$ automatically scrambles the boundary.
\end{dfn}

Now we can state a combined and simplified version of Lemma 7.2.2(2)
and Lemma 7.2.5 of \cite{bfmot10}. 

\begin{lem}\label{l:fxpt}
The following facts hold.

\begin{enumerate}

\item Suppose that $f$ maps a dendrite $D_1$ to a dendrite
$D_2\supset D_1$. Put $E=\ol{D_2\sm D_1}\cap D_1$. Moreover, suppose
that $f$ scrambles the boundary. Then $f$ has a fixed point $a\in D_1$.

\item If in the above situation that there are no fixed points
in $E$, $f=f_\sim$ is a topological polynomial, and $D_2\subset
J_\sim$ is a subcontinuum of a dendritic topological Julia set, then
$a$ can be chosen to be a cutpoint of $D_1$.

\end{enumerate}

\end{lem}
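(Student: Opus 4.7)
The plan is to apply the classical fixed point theorem for dendrites (every continuous self-map of a dendrite has a fixed point), but to the auxiliary map $r\circ f:D_1\to D_1$ where $r:D_2\to D_1$ is a retraction; one then uses the hypotheses to upgrade the resulting fixed point to a fixed point of $f$ itself, and in part (2) to a cutpoint.

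For part (1), let $r:D_2\to D_1$ denote the first-point retraction, sending $x\in D_2$ to the unique $r(x)\in D_1$ through which every arc in $D_2$ from $x$ to $D_1$ passes; then $r$ is continuous and fixes $D_1$ pointwise. Applying the dendrite fixed point theorem to $r\circ f:D_1\to D_1$ yields $a\in D_1$ with $r(f(a))=a$. If $f(a)\in D_1$ then $f(a)=r(f(a))=a$ and we are done. Otherwise $f(a)\in D_2\sm D_1$, which forces $a\in E$. Assume for contradiction that $f(a)\ne a$: boundary scrambling places $f(a)$ in a component $C$ of $D_2\sm\{a\}$ that meets $D_1$; picking $y\in C\cap D_1$, the unique arc $[f(a),y]$ in $D_2$ must avoid $a$ (otherwise removing $a$ would disconnect the connected set $C\subset D_2\sm\{a\}$), so $r(f(a))\in C\cap D_1$. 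Since $a\notin C$, this contradicts $r(f(a))=a$, and hence $f(a)=a$.

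For part (2), assume further that $f=f_\sim$, $D_2\subset J_\sim$ with $J_\sim$ dendritic, and $f$ has no fixed points in $E$. The fixed point $a$ from part (1) then lies in $D_1\sm E$; if $a$ is a cutpoint of $D_1$, we are done. Otherwise $a$ is an endpoint of $D_1$, and since $a\notin E$ a neighborhood of $a$ in $D_2$ lies in $D_1$, so $a$ is also an endpoint of $D_2$ (and of $J_\sim$). The strategy is now to cut $D_1$ just below $a$: choose a small half-open arc $(a',a]\subset D_1$ with $f(a')$ lying in $D_1\sm(a',a]$ and set $D_1'=D_1\sm(a',a]$. One verifies that $f$ still scrambles the boundary of $D_1'$ and that no new fixed point appears on the fresh boundary, then re-applies part (1) to $D_1'$ to produce a new fixed point $a_1\ne a$. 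Since $f_\sim$ has only finitely many fixed points on $J_\sim$, iterating the trimming finitely many times eventually yields a fixed cutpoint of $f$ in $D_1$.

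The main obstacle lies in the trimming step of part (2): the cut point $a'$ must be chosen so that $f$ still satisfies boundary scrambling on $D_1'$ and so that no fixed point of $f$ appears on the new boundary $\ol{D_1\sm D_1'}\cap D_1'$. Verifying these conditions requires a detailed analysis of how $f_\sim$ acts on arcs terminating at the fixed endpoint $a$, which is controlled by the combinatorial structure of the fixed $\sim$-class $G_a\subset\uc$ and its orientation-preserving covering action under $\si_d$; this local dynamical analysis is the substantive content of the lemmas in \cite{bfmot10} from which the present statement is distilled.
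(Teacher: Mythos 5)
Your part (1) is correct and complete: the first-point retraction $r:D_2\to D_1$ is continuous, the dendrite fixed point property gives $a$ with $r(f(a))=a$, and your upgrade argument (if $f(a)\notin D_1$ then $a\in E$, and boundary scrambling plus the fact that the arc from $f(a)$ to a point of $D_1$ inside the component $C$ avoids $a$ contradicts $r(f(a))=a$) is sound. Note for comparison that the paper does not prove this lemma at all; it imports it from Lemmas 7.2.2 and 7.2.5 of \cite{bfmot10}, so part (1) of your write-up is a genuine self-contained argument, which is a plus.

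Part (2), however, has a real gap: the entire substance of the statement is the trimming step, and you do not carry it out. Concretely, after cutting at $a'$ the only boundary-scrambling condition that is in doubt is the one at the new boundary point $a'$ itself (for old points $e\in E\cap D_1'$ the relevant component automatically picks up $a'\in D_1'$, so scrambling persists there); what you must show is that $a'$ can be chosen arbitrarily close to the fixed endpoint $a$, non-fixed, outside $\overline{D_2\setminus D_1}$, and with $f(a')$ \emph{not} in the component of $D_2\setminus\{a'\}$ containing $a$. If $f$ happened to map every point near $a$ on the arc toward $a$ into that component --- i.e.\ if $a$ behaved like an attracting endpoint along the arc --- the construction would stall at the very first step, and nothing in your argument rules this out. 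Excluding this is exactly where the hypotheses ``$f=f_\sim$ a topological polynomial, $D_2\subset J_\sim$ dendritic'' must be used (repelling behaviour of fixed points of $f_\sim$ on dendritic Julia sets), and you explicitly defer this verification to the cited lemmas of \cite{bfmot10}, which amounts to assuming the result you are asked to prove. Two smaller points: the finiteness of the fixed point set of $f_\sim$ on $J_\sim$, on which your induction terminates, is asserted without proof (it is true --- fixed points correspond to finitely many $\si_d$-rotational classes --- but it needs an argument); and the set $D_1\setminus(a',a]$ need not be a dendrite if branch points of $D_1$ accumulate at $a$, so the trimmed continuum should instead be $D_1$ minus the component of $D_1\setminus\{a'\}$ containing $a$. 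As it stands, part (2) is a plausible strategy sketch, not a proof.
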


Yet another result from \cite{bfmot10} is Lemma 7.2.2(1) which is
stated below. When talking about points in a dendrite $D$, we say
that a point $x$ \emph{separates} a point $y$ from a point $z$ if
$y$ and $z$ belong to distinct components of $D\sm \{x\}$.

\begin{lem}\label{l:fixinarc}
Suppose that $f:X\to X$ is a continuous self-mapping of a dendrite
$X$. Suppose that $a\ne b$ are points in $X$ such that $f(a)$ is separated
from $b$ by $a$ and $f(b)$ is separated from $a$ by $b$. Then there
exists a fixed point in $(a, b)$.
\end{lem}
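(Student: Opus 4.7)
My plan is to adapt the standard interval intermediate-value argument to the dendrite $X$ by composing $f$ with the first-point retraction onto $[a,b]$. I would let $r:X\to[a,b]$ be the continuous retraction sending $y$ to the unique point $r(y)\in[a,b]$ with $[y,r(y)]\cap[a,b]=\{r(y)\}$, and consider $g=r\circ f|_{[a,b]}:[a,b]\to[a,b]$. For $z\in(a,b)$ I write $U(z)$ and $V(z)$ for the components of $X\setminus\{z\}$ containing $a$ and $b$, respectively. A short dendrite computation (using $[a,b]\cap U(z)=[a,z)$ and uniqueness of arcs) then gives the retraction dictionary: $y\in U(z)\Rightarrow r(y)\in[a,z)$; $y\in V(z)\Rightarrow r(y)\in(z,b]$; and $r(y)=z$ with $y\ne z$ forces $y$ to lie in a component of $X\setminus\{z\}$ distinct from $U(z)$ and $V(z)$.

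I may assume $f(a)\ne a$ and $f(b)\ne b$. The hypothesis then places $f(a)\in U(a)$. For $x\in(a,b)$ close to $a$, continuity of $f$ gives $f(x)\in U(a)$; because $x\in V(a)$ is disjoint from the connected set $U(a)\cup\{a\}$, the latter sits inside $U(x)$, so $f(x)\in U(x)$ and hence $g(x)<x$. The symmetric argument gives $g(x)>x$ for $x$ near $b$ in $(a,b)$. An application of the intermediate value theorem to $g(x)-x$ then produces a fixed point of $g$ in $(a,b)$, and the set of such fixed points is closed in $[a,b]$ and bounded away from $\{a,b\}$, so it has a least element $p\in(a,b)$.

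The delicate step is to promote $p$ from a fixed point of $g$ to a fixed point of $f$. Suppose for contradiction that $f(p)\ne p$; then $r(f(p))=p$ together with the retraction dictionary forces $f(p)$ into some component $B$ of $X\setminus\{p\}$ different from $U(p)$ and $V(p)$. By continuity $f(x)\in B$ for $x\in(a,p)$ sufficiently close to $p$. For such $x$, the connected set $B\cup\{p\}$ misses $x$ (since $x\in U(p)$ and $x\ne p$) and contains $p\in V(x)$, hence lies in $V(x)$; so $f(x)\in V(x)$ and $g(x)>x$. On the other hand, by the minimality of $p$ and the sign of $g-\mathrm{id}$ near $a$, the continuous function $g(x)-x$ has no zero on $(a,p)$ and is negative near $a$, so it is negative throughout $(a,p)$. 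This contradiction forces $f(p)=p$.

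The main obstacle is precisely this last promotion step, since a fixed point of the retracted map $g$ is a priori not a fixed point of $f$: the retraction can collapse an excursion of $f$ into a branch of $X$ at $p$. Choosing the leftmost fixed point of $g$ in $(a,b)$ is what rules this out, by forcing incompatible signs of $g-\mathrm{id}$ on the two sides of any hypothetical $p$ at which $f$ escapes onto a branch.
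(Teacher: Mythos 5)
Your argument is correct. The key point is that you handle the one genuine pitfall of the retraction trick: a fixed point of $g=r\circ f|_{[a,b]}$ need not be a fixed point of $f$, because $f$ may send it into a branch of $X$ at that point which $r$ collapses back; choosing the \emph{least} fixed point $p$ of $g$ in $(a,b)$ and comparing the sign of $g-\mathrm{id}$ on $(a,p)$ with the sign forced by an excursion of $f(p)$ into a third component of $X\sm\{p\}$ rules this out cleanly. The boundary estimates ($g<\mathrm{id}$ near $a$, $g>\mathrm{id}$ near $b$) follow correctly from the separation hypotheses, and the ``retraction dictionary'' is a standard property of the first-point map onto a subdendrite. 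Note that the paper itself does not prove this lemma: it is quoted as Lemma 7.2.2(1) of \cite{bfmot10}, so your retraction-plus-intermediate-value argument serves as a self-contained proof rather than a variant of one given here. Two cosmetic remarks: your notation $U(a)$, $V(a)$ is undefined by your own convention (which requires $z\in(a,b)$) and should be replaced by ``the component of $X\sm\{a\}$ containing $f(a)$'' and ``the component containing $b$''; and the phrase ``I may assume $f(a)\ne a$'' is better stated as an observation that the separation hypothesis already forces $f(a)\ne a$ and $f(b)\ne b$, since a fixed endpoint would not be a legitimate reduction (the conclusion requires a fixed point in the open arc).
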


Finally, we state a result which immediately follows from Theorem
7.2.6 of \cite{bopt11} and well-known properties of periodic points
of topological Julia sets; speaking of \emph{periodic cutpoints} of
a map $g:Y\to Y$ we mean cutpoints of $Y$ which are periodic (thus,
if $Y\subset Z$ then we do not consider cutpoints of $Z$ which are
endpoints of $Y$ as periodic cutpoints of $f:Y\to Y$).

\begin{thm}\label{t:infcutper}
Suppose that $X\subset J_\sim$ is an invariant subdendrite of a
topological Julia set $J_\sim$. Then there are infinitely many
periodic cutpoints of $f_\sim|_X$.
\end{thm}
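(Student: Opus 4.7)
The plan is to bootstrap a single‑periodic‑cutpoint existence result into an infinite sequence by peeling off previously found periodic cutpoints and reapplying the base result to progressively smaller invariant subdendrites. The base result itself comes essentially for free from Lemma~\ref{l:fxpt}(2): applied to $f_\sim|_X\colon X\to X$ with $D_1=D_2=X$, the set $E=\ol{D_2\sm D_1}\cap D_1$ is empty, so the boundary scrambling condition and the absence of fixed points in $E$ both hold vacuously, and the lemma delivers a fixed cutpoint $a_1\in X$ of $f_\sim|_X$. (This is exactly what Theorem~7.2.6 of \cite{bopt11} gives in the invariant dendrite case.)

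For the inductive step, suppose distinct periodic cutpoints $a_1,\dots,a_n$ of $f_\sim|_X$ have been produced with periods $p_1,\dots,p_n$, set $N=p_1\cdots p_n$, and write $g=f_\sim^N$, so $g(a_i)=a_i$ for each $i$. The finite set $\{a_1,\dots,a_n\}$ disconnects $X$ into finitely many components $V_1,\dots,V_m$. The key topological remark is that every boundary point of $\ol{V_j}$ in $X$ (necessarily in $\{a_1,\dots,a_n\}$) is an \emph{endpoint} of $\ol{V_j}$: removing a single $a_i$ from $\ol{V_j}$ leaves $V_j$ together with the remaining boundary points $\{a_k\}_{k\neq i}$, each of which is a limit point of the connected set $V_j$, so the result is still connected. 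Now produce an iterate $g^M=f_\sim^{NM}$ and an index $j_0$ with $g^M(\ol{V_{j_0}})\subseteq\ol{V_{j_0}}$, and apply Lemma~\ref{l:fxpt}(2) to $g^M|_{\ol{V_{j_0}}}$ (again with $D_1=D_2=\ol{V_{j_0}}$ and $E=\emptyset$) to obtain a fixed cutpoint $a_{n+1}\in\ol{V_{j_0}}$ of $g^M$. Since $a_{n+1}$ is a cutpoint of $\ol{V_{j_0}}$ while every boundary point of $\ol{V_{j_0}}$ is an endpoint, we must have $a_{n+1}\in V_{j_0}$, so $a_{n+1}\notin\{a_1,\dots,a_n\}$ is a genuinely new periodic cutpoint of $f_\sim|_X$. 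Iterating produces an infinite sequence of distinct periodic cutpoints.

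The main obstacle is the pigeonhole step yielding an invariant $\ol{V_{j_0}}$. Because $g$ may send interior points of some $V_j$ onto elements of $\{a_1,\dots,a_n\}$, the image $g(V_j)$ can meet several different $V_k$'s, and the induced \emph{relation} on the finite collection $\{V_1,\dots,V_m\}$ is a priori only multivalued rather than a genuine map. The ``well‑known properties of periodic points of topological Julia sets'' invoked in the theorem --- namely, that near each fixed $a_i$ the topological polynomial $g$ acts as an orientation‑preserving branched covering of finite valence and thus induces a genuine permutation on the finitely many local branches at $\{a_1,\dots,a_n\}$ --- are precisely what upgrade this relation to a single‑valued permutation of local branches, for which a standard pigeonhole on iterates produces an invariant $\ol{V_{j_0}}$. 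This last ingredient, together with Theorem~7.2.6 of \cite{bopt11}, is what the theorem is distilling.
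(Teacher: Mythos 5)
You are proving a statement the paper itself does not prove: Theorem~\ref{t:infcutper} is imported wholesale from Theorem~7.2.6 of \cite{bopt11}, so there is no in-paper argument to compare with and your attempt has to stand on its own. Its first half does: applying Lemma~\ref{l:fxpt}(2) with $D_1=D_2=X$ (so $E=\0$ and both hypotheses are vacuous) legitimately produces a fixed cutpoint, and your remark that every boundary point of $\ol{V_j}$ is an endpoint of $\ol{V_j}$ is correct (the finiteness of the collection $\{V_1,\dots,V_m\}$ should be justified too, but it does hold here because $\sim$-classes are finite, so every point of $J_\sim$, hence of $X$, has finite valence).

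The genuine gap is the pigeonhole step: the claim that some iterate $g^M$ satisfies $g^M(\ol{V_{j_0}})\subseteq\ol{V_{j_0}}$ for some component $V_{j_0}$ of $X\sm\{a_1,\dots,a_n\}$ is false in general, and the ``permutation of local branches'' does not rescue it. The branched-covering structure at the fixed points $a_i$ only controls germs: after passing to a power, each local branch at $a_i$ is mapped into itself \emph{near} $a_i$, but since $f_\sim$ has degree $d\ge 2$ and is expanding, the global image of a branch typically blows up and covers all of $X$. Concretely, take the Chebyshev example $z^2-2$, whose Julia set is the arc $[-2,2]$ (a dendrite), let $X=J$, $n=1$ and $a_1=-1$ (the fixed cutpoint); the components are $V_1=[-2,-1)$ and $V_2=(-1,2]$, and one computes $f(\ol{V_1})=\ol{V_2}$ and $f(\ol{V_2})=[-2,2]$, so $f^M(\ol{V_j})=[-2,2]\not\subseteq\ol{V_j}$ for every $M\ge 2$ and both $j$: no iterate leaves any $\ol{V_j}$ invariant, even though the two germs at $-1$ are honestly permuted. (The theorem's conclusion of course still holds there, so this refutes the method, not the statement.) Nor can you instead apply Lemma~\ref{l:fxpt} to the non-invariant dendrite $D_1=\ol{V_j}\subset D_2=X$: there $E$ consists of the points $a_i$, which are $g$-fixed, so the hypothesis of part (2) fails and part (1) may simply return an $a_i$ you already have. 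So the inductive step needs a genuinely different idea -- this is exactly where the repelling behavior of periodic points of topological polynomials (used in \cite{bfmot10, bopt11}) has to enter, rather than the soft pigeonhole argument you propose.
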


\subsection{Dynamical core of topological polynomials}\label{ss:dyco}

There are a few new results, which to an extent relate the set of
periodic cutpoints of $f_\sim$ to the set of limit points of
persistent cutpoints as well as limit sets of some critical points.
These results were recently obtained in \cite{bopt11}, Section 3. In
the case when $J_\sim$ is a dendrite the main result (Theorem 3.12)
of Section 3 of \cite{bopt11} can be stated as follows.

\begin{thm}\label{t:dyco}
In the case of a topological polynomial $f_\sim$ with dendritic
Julia set $J_\sim$ the minimal invariant continuum containing limit
sets of all persistent cutpoints of $f_\sim$ and the minimal
invariant continuum containing all periodic cutpoints of $f_\sim$
coincide. Moreover, this continuum (denote it $\ce_{f_\sim}$)
coincides with the smallest invariant continuum containing all critical points
of $f_\sim$ which belong to $\ce_{f_\sim}$.
\end{thm}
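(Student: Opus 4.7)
Write $A$ for the minimal invariant subcontinuum of $J_\sim$ containing the union of limit sets $\om(x)$ over all persistent cutpoints $x$, and $B$ for the minimal invariant subcontinuum containing $\pc_\sim$; both are well defined because the intersection of any family of subcontinua of a dendrite is a subcontinuum, so one can take the intersection of all invariant subcontinua that contain the given set. The inclusion $B \subseteq A$ is short: using the combinatorial description of $f_\sim$ via $\sim$, a periodic multi-point $\sim$-class cannot collapse to a one-point class (its orbit must return to itself), so the orbit of any periodic cutpoint consists entirely of cutpoints, and every periodic cutpoint is \emph{persistent}. Since $p \in \om(p)$ for every periodic $p$, we get $\pc_\sim \subseteq A$, and the minimality of $B$ yields $B \subseteq A$.

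The crux is the reverse inclusion $A \subseteq B$. Since $B$ is itself an invariant continuum, it suffices to prove that $\om(x) \subseteq B$ for every persistent cutpoint $x$; the minimality of $A$ then finishes the job. The strategy is to produce periodic cutpoints of $f_\sim$ accumulating on any prescribed $y \in \om(x)$, so that $y \in \overline{\pc_\sim} \subseteq B$. Pick $n_k \to \infty$ with $f^{n_k}(x) \to y$, a connected neighborhood $U$ of $y$, and a short subarc $[a,b]$ through $y$ inside $U$ in whose interior $f^{n_k}(x)$ lies for large $k$ as a genuine cutpoint (using persistence of $x$). I then shrink to a subdendrite $D_1 \subset U$ on which $f^{n_k}$ scrambles the boundary in the sense of Definition~\ref{d:bouscr}; Lemma~\ref{l:fxpt}(2) delivers a fixed cutpoint of $f^{n_k}$ inside $D_1 \subset U$, which is a periodic cutpoint of $f_\sim$ arbitrarily close to $y$. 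In delicate situations where constructing the scrambling subdendrite is awkward (for instance when $y$ is a branchpoint of very high valence), an alternative arc-based argument via the separation hypotheses of Lemma~\ref{l:fixinarc} produces a fixed point of $f^{n_k}$ in the open subarc $(a,b)$ with the same effect.

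For the final assertion, let $D$ be the minimal invariant subcontinuum containing the critical points of $f_\sim$ that lie in $\ce_{f_\sim} := A = B$. Trivially $D \subseteq \ce_{f_\sim}$. For the converse, my plan is to show $\pc_\sim \subseteq D$, from which the minimality of $B = \ce_{f_\sim}$ forces $\ce_{f_\sim} \subseteq D$. Off the closure of the forward critical orbit inside $\ce_{f_\sim}$, the map $f_\sim$ is a local homeomorphism, so periodic cutpoints whose orbits stay bounded away from the critical set admit univalent pullbacks along their periods and are forced into $D$ by the same ``expansion + boundary scrambling'' scheme used in the previous paragraph applied to $D$ itself; periodic cutpoints whose orbits accumulate on the critical set automatically lie in $D$ because $D$ is closed and contains the critical orbits. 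The principal obstacle throughout is the fixed-point construction in the middle paragraph, specifically the step of upgrading a fixed \emph{point} of $f^{n_k}$ to a fixed \emph{cutpoint}, for which the persistence of $x$ has to be leveraged to verify the boundary-scrambling hypothesis of Lemma~\ref{l:fxpt}(2) on a well-chosen subdendrite near $y$.
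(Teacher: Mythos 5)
First, a point of reference: the paper does not prove Theorem~\ref{t:dyco} at all; it is imported verbatim from \cite{bopt11} (Theorem 3.12 there) and used as background, so there is no in-paper proof to compare against. Judged on its own, your proposal has a genuine gap at its crux. To get $A\subseteq B$ you propose to show $\om(x)\subseteq\ol{\pc_\sim}$ for every persistent cutpoint $x$ --- but that is precisely the statement of Theorem~\ref{t:limdend}, the main \emph{new} result of this paper, which is strictly stronger than Theorem~\ref{t:dyco} (the latter only compares minimal invariant continua containing the two sets, as the paper itself emphasizes in Section~\ref{ss:dyco}). Your justification of this step is a single sentence: ``shrink to a subdendrite $D_1\subset U$ on which $f^{n_k}$ scrambles the boundary'' and apply Lemma~\ref{l:fxpt}(2). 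Nothing forces such a subdendrite to exist: knowing only that $f^{n_k}(x)\to y$ gives no control over where $f^{n_k}$ sends the boundary points of a small subdendrite around $y$, and boundary scrambling is exactly what fails in general. This is the hard part of the whole subject; in the paper it consumes Lemmas~\ref{l:onesidelim}, \ref{l:onesidelim1} and \ref{l:limend}, and requires the arc-type versus non-separating dichotomy (arc-type limit points must first be shown dense in $\om(x)$ via finiteness of critical leaves and expansion of $\si_d$), the infinitude of periodic cutpoints in invariant subdendrites (Theorem~\ref{t:infcutper}), and Lemma~\ref{l:dyco} --- the last two themselves coming from \cite{bopt11}, i.e.\ from the same circle of results you are trying to prove. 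You acknowledge the obstacle yourself in your closing sentence, which is an admission that the key hypothesis of Lemma~\ref{l:fxpt}(2) has not been verified.

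The final paragraph has the same character. The natural route to $\ce_{f_\sim}\subseteq D$ is Lemma~\ref{l:dyco} applied to $X=\ce_{f_\sim}$: every cutpoint of $\ce_{f_\sim}$ eventually maps into $D$, and for a periodic point ``eventually in the forward-invariant set $D$'' puts the whole cycle in $D$, whence $\pc_\sim\cap\ce_{f_\sim}\subseteq D$ and minimality finishes. Instead you invoke ``univalent pullbacks'' and ``the same expansion plus boundary scrambling scheme,'' which is again a plan rather than an argument (and the dichotomy you draw between orbits bounded away from, versus accumulating on, the critical set is vacuous for finite periodic orbits --- the real issue is only whether the needed fixed-cutpoint mechanism works, which is the unproved step above). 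The easy inclusion $B\subseteq A$ (periodic cutpoints are persistent because degenerate $\sim$-classes map to degenerate classes, so a periodic non-degenerate class stays non-degenerate) is correct, but the two substantive inclusions are asserted, not established.
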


The continuum defined in Theorem~\ref{t:dyco} is called the
\emph{dynamical core} of $f_\sim$. Clearly, Theorem~\ref{t:dyco}
relates the sets of points which we want to study. However this
connection is not sufficiently precise as in Theorem~\ref{t:dyco} we
deal with minimal continua containing certain sets of points (such
as the union of all limit points of persistent cutpoints and the set
of all periodic cutpoints) rather than with these sets themselves.
The present paper seeks to improve and specify these results by
establishing, at least in the case of dendrites, the connection
between the sets themselves.

We will need the following lemma which is a simplified version of
Lemma 3.11 of \cite{bopt11} as applies in the case when $J_\sim$ is
a dendrite.

\begin{lem}\label{l:dyco}
Suppose that $X\subset J_\sim$ is an invariant continuum and $x\in
X$ is a cutpoint of $X$. Then there exists $n$ such that
$f^n_\sim(x)$ belongs to the minimal invariant continuum containing all
critical points of $f_\sim$ which belong to $X$.
\end{lem}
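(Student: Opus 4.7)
The plan is to argue by contradiction. Let $Y$ denote the minimal $f_\sim$-invariant subcontinuum of $X$ containing all critical points of $f_\sim$ that lie in $X$, and suppose toward contradiction that $f^n_\sim(x)\notin Y$ for every $n\ge 0$.

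First, I would describe the structure of $X\setminus Y$. Since $X$ is a dendrite and $Y$ a subcontinuum, each component $V$ of $X\setminus Y$ is an open subdendrite of $X$ whose closure meets $Y$ in a single ``root'' point $v\in Y$. The minimality of $Y$ forces $V$ to contain no critical points of $f_\sim$, and consequently $f_\sim$ is injective on every arc contained in $\overline V$ (on an arc, a locally injective continuous map is automatically globally injective).

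Next, I would track the orbit through these components. Let $V_n$ be the component of $X\setminus Y$ containing $f^n_\sim(x)$, with root $v_n\in Y$, and set $A_n=[v_n,f^n_\sim(x)]\subseteq \overline{V_n}$. The arc $f_\sim(A_n)$ runs from $f_\sim(v_n)\in Y$ to $f^{n+1}_\sim(x)\in V_{n+1}$; since $V_{n+1}$ is attached to $Y$ only at $v_{n+1}$, the image arc must pass through $v_{n+1}$, yielding $A_{n+1}\subseteq f_\sim(A_n)$. Iterating gives $f^n_\sim(A_0)\supseteq A_n$ for every $n$, so the arcs $A_n$ never degenerate.

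The main obstacle is to extract a contradiction from this nested arc structure. My approach is to let $W\subseteq X$ be the smallest $f_\sim$-invariant subcontinuum containing $Y\cup\{x\}$; this is a subdendrite of $X$ strictly containing $Y$. By Theorem~\ref{t:infcutper} applied to $W$, the map $f_\sim|_W$ has infinitely many periodic cutpoints. Using the nondegenerate arcs $A_n\subseteq W\setminus Y$ together with a Hausdorff-compactness argument and Lemma~\ref{l:fixinarc} applied to appropriate iterates $f^k_\sim$ on sub-arcs of the $A_n$, I would locate a periodic cutpoint $p\in W\setminus Y$. But then the finite orbit of $p$ is an $f_\sim$-invariant subset of $X$ disjoint from $Y$, and hence from all critical points of $f_\sim$ in $X$; by the dynamical-core argument underlying Theorem~\ref{t:dyco}, applied to the restricted system $f_\sim|_X$, every periodic cutpoint of $f_\sim|_X$ must belong to any invariant continuum containing all critical points of $f_\sim$ in $X$ --- in particular to $Y$. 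This contradicts $p\notin Y$. The delicate technical point is the construction of the periodic cutpoint outside $Y$; once this is in place, the contradiction with the minimality characterization of $Y$ is immediate.
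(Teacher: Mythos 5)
First, note that the paper does not prove this statement at all: Lemma~\ref{l:dyco} is imported as ``a simplified version of Lemma 3.11 of \cite{bopt11}'', so there is no in-paper argument to compare against, and your sketch has to stand on its own. It does not. The entire content of the lemma is concentrated in the step you yourself flag as ``the delicate technical point'': producing a periodic cutpoint $p\in W\setminus Y$ from the assumption that the orbit of $x$ avoids $Y$. Nothing you cite delivers this. Theorem~\ref{t:infcutper} gives infinitely many periodic cutpoints of $f_\sim|_W$, but with no control on their location --- they could all lie in $Y$. Lemma~\ref{l:fixinarc} requires a two-sided repelling configuration ($f(a)$ separated from $b$ by $a$ and $f(b)$ separated from $a$ by $b$) on an arc outside $Y$, and your nested-arc observation $A_{n+1}\subseteq f_\sim(A_n)$ gives nothing of the sort: the scenario it must rule out is precisely the orbit of $x$ converging to $Y$ from outside without ever entering it (infinitely many distinct components $V_n$, diameters of $A_n$ tending to $0$), and in that scenario there is no recurrence outside $Y$ to feed a fixed-point lemma. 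Ruling this scenario out is exactly what requires the polynomial-specific machinery (expansion of $\sigma_d$, the behaviour of critical leaves/gaps), i.e.\ the actual content of Lemma 3.11 of \cite{bopt11}. A telling symptom: your argument never uses the hypothesis that $x$ is a \emph{cutpoint} of $X$; if the sketch worked it would show that \emph{every} point of $X$ eventually maps into $Y$, which is false (orbits of endpoints need not enter the core --- this is why the paper must split $\mu$ into $\mu_e$ and $\mu_c$ in Corollary~\ref{c:meas}).

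The closing step is also problematic. You invoke ``the dynamical-core argument underlying Theorem~\ref{t:dyco}, applied to the restricted system $f_\sim|_X$'' to conclude that every periodic cutpoint of $f_\sim|_X$ lies in $Y$. Theorem~\ref{t:dyco} as stated applies to $f_\sim$ on $J_\sim$, not to an arbitrary invariant subdendrite $X$, so you are appealing to an unstated strengthening; worse, since Lemma~\ref{l:dyco} is Lemma 3.11 of \cite{bopt11} and Theorem~\ref{t:dyco} is Theorem 3.12 of the same section, the lemma is (by all appearances) an ingredient of the theorem you want to use, so the reduction is very likely circular. The preliminary structural observations (components of $X\setminus Y$ attach to $Y$ at a single point, absence of critical points off $Y$, $A_{n+1}\subseteq f_\sim(A_n)$) are fine, but they are the easy part; as it stands the proposal assumes the hard part rather than proving it.
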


\section{Main results}\label{s:mt}

For brevity in what follows we will often omit $\sim$ from the
notation (thus, we write $J$ instead of $J_\sim$, $f$ instead of
$f_\sim$, etc.). Also, we often write $\si$ instead of $\si_d$.

We begin by considering the case of an endpoint. It turns out to be
easier, still it shows the way our tools apply.

\begin{lem}\label{l:endpt}
Let $f:X\to X$ be a continuous self-mapping of a dendrite $X$ and
let $b$ be an endpoint of $X$. Suppose that $b$ is a limit point of
$x$ of arc type so that there exists an arc $[a, b]\subset X$ and a
sequence of images $f^{n_k}(x)\in (a, b)$ of $x$ which converge to
$b$. Then $b$ is a limit point of periodic points of $f$. If
$X\subset J$ is an invariant subcontinuum of a topological dendritic
Julia set $J$ and $f$ is a topological polynomial then $b$ is in
fact a limit point of periodic cutpoints of $f|_X$.
\end{lem}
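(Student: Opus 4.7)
The plan is, given $\e > 0$, to locate a periodic point of $f$ within $\e$ of $b$ (and a periodic cutpoint of $X$ in the topological polynomial setting) by an application of Lemma~\ref{l:fxpt}. The key trick will be to let the iterate $p=f^{n_k}(x)$ itself play the role of the separating cutpoint around which a small subdendrite is built, so that the boundary scrambling condition is satisfied automatically.

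First, using the standard fact that an endpoint of a dendrite has arbitrarily small neighborhoods of the form $Y_c\cup\{c\}$, where $c$ is a cutpoint and $Y_c$ denotes the component of $X\sm\{c\}$ containing the endpoint, I would pick $c\in(a,b)$ so close to $b$ that $Y_c\cup\{c\}$ has diameter less than $\e$. Then choose $k$ large enough that $p=f^{n_k}(x)\in(c,b)$ and let $Y_p$ be the component of $X\sm\{p\}$ containing $b$; since $p$ lies in the open arc $(c,b)$ one has $Y_p\subset Y_c$, so $\overline{Y_p}=Y_p\cup\{p\}\subset Y_c\cup\{c\}$ has diameter less than $\e$, and its boundary in $X$ is the single point $p$. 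Finally, choose $l>k$ large enough that $q=f^{n_l}(x)\in Y_p$ and put $N=n_l-n_k$, so that $f^N(p)=q\in Y_p$.

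The next step is to apply Lemma~\ref{l:fxpt} with $D_1=\overline{Y_p}$ and $D_2=D_1\cup f^N(D_1)$; the latter is a subdendrite of $X$ because $D_1\cap f^N(D_1)\ni q$. Since $\partial D_1=\{p\}$ in $X$, the set $E=\overline{D_2\sm D_1}\cap D_1$ is automatically contained in $\{p\}$. If $p$ is fixed by $f^N$ there is nothing more to prove, since $p\in(c,b)$ is already a periodic cutpoint of $X$ within $\e$ of $b$. Otherwise $E=\{p\}$ and $p$ is non-fixed, and the scrambling hypothesis at $p$ asks that $f^N(p)$ lie in a component of $D_2\sm\{p\}$ meeting $D_1$; this holds because $q=f^N(p)\in Y_p$ and $Y_p$, being connected, lies in a single component of $D_2\sm\{p\}$, which meets $D_1$. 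Thus Lemma~\ref{l:fxpt}(1) supplies a fixed point of $f^N$ in $\overline{Y_p}$, which is a periodic point of $f$ within $\e$ of $b$. In the polynomial case one has $D_2\subset X\subset J_\sim$ and $E$ contains no fixed points, so Lemma~\ref{l:fxpt}(2) instead delivers a fixed point that is a cutpoint of $D_1$; a short verification using $\partial D_1=\{p\}$ shows that any cutpoint of $D_1$ distinct from $p$ is automatically a cutpoint of $X$, finishing the second assertion.

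The step I expect to be the main obstacle is the verification of boundary scrambling, and the entire argument is arranged so as to reduce it to a triviality: by anchoring the small subdendrite at the iterate $p$ rather than at a prechosen cutpoint, one forces $f^N(p)=q$ to land on the $D_1$-side of $p$ by construction. Had we instead worked with $\overline{Y_c}$ for the original $c$, we would be forced into a case analysis on the location of $f^N(c)$, which can genuinely escape $\overline{Y_c}$ in the wrong direction.
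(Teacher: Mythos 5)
Your proof is correct and follows essentially the same route as the paper: the paper also takes the component $V$ of $X\sm\{f^{n_k}(x)\}$ containing $b$ (your $Y_p$) and applies Lemma~\ref{l:fxpt}(1)--(2) to $\ol V$ with the map $f^{n_{k+1}-n_k}$, using local connectivity to shrink these components to $b$. Your write-up merely makes explicit what the paper leaves implicit (the $\e$-smallness of $\ol{Y_p}$, the choice $D_2=D_1\cup f^N(D_1)$ instead of $X$, and the separate treatment of the case where $p$ is $f^N$-fixed), so no substantive difference.
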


\begin{proof}
Fix some $k$. Let $V$ be the component of $X\sm \{f^{n_k}(x)\}$
which contains $b$. We apply Lemma~\ref{l:fxpt} to $\ol{V}$ and to
the map $f^{n_{k+1}-n_k}$. Then by Lemma~\ref{l:fxpt}(1) $\ol{V}$
contains a periodic point $z$ (actually, a $f^{n_{k+1}-n_k}$-fixed
point $z$); moreover, since $f^{n_k}(x)$ is not
$f^{n_{k+1}-n_k}$-fixed, then $z\in V$.  Since $X$ is locally
connected, this implies the lemma in the general case. On the other
hand, if $f$ is a topological polynomial with dendritic Julia set
$J$ and $X\subset J$ then by Lemma~\ref{l:fxpt}(2) $b$ is a limit
point of periodic cutpoints of $f$ as desired.
\end{proof}

Clearly, this lemma proves Theorem~\ref{t:recdend} in the case when
$b$ is an endpoint of $X$. It also proves in part Theorem~\ref{t:limdend} by
showing that, in the case of a topological dendritic Julia set $J$
and a topological polynomial an endpoint $b$ of $X\subset J$ which
is a limit point of arc type is a limit point of periodic cutpoints.
To deal with the general case we need the
following result; it has a technical nature but implies a lot of
useful conclusions.

\begin{lem}\label{l:onesidelim}
Let $f:X\to X$ be a continuous self-mapping of a dendrite $X$.
Suppose that $b$ is a limit point of $x$ of arc type so that there
exists an arc $[a, b]\subset X$ and a sequence of images
$f^{n_k}(x)\in (a, b)$ of $x$ which converge to $b$. Moreover,
suppose that there exists $d\in (a, b)$ such that the component $B$
of $X\sm\{d, b\}$, containing $(d, b)$, has the following
properties:

\begin{enumerate}

\item if we make no extra assumptions about $f$ and $X$,
then we assume that $B$ does not contain any \emph{periodic points};

\item if we are given that $X\subset J_\sim$ is a subset of a
dendritic Julia set and $f_\sim$ is a topological polynomial, then
we assume only that $B$ does not contain \emph{periodic cutpoints of $f|_X$}.

\end{enumerate}

Then $b$ never enters $(d, b)$ and for every point $y\in (d, b)$ and
any number $m$ such that $f^m(y)\in (d, b)$ we have that $f^m(y)\in
(y, b)$. In particular, we may assume that $x\in (a, b)$, $f^{n_1}(x), \dots,
f^{n_k}(x), \dots$ are all images of $x$ which enter $(x, b)$, and
that these points approach $b$ in a monotone fashion.
\end{lem}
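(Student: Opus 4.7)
The plan is to argue by contradiction. Observe first that every point of the open arc $(d,b)$ separates $d$ from $b$, so any periodic point lying in $(d,b)$ is automatically a periodic cutpoint of $X$; producing such a point inside $B$ therefore contradicts hypotheses (1) and (2) uniformly. Moreover, the hypothesis forces $b$ to be a cutpoint of $X$: if $b$ were an endpoint, Lemma~\ref{l:endpt} (applied directly, using the same sequence $f^{n_k}(x)\to b$) would produce periodic (cut)points in the component $V_k$ of $X\setminus\{f^{n_k}(x)\}$ containing $b$, which for an endpoint $b$ sits inside $B$ after removing $b$, contradicting the hypothesis. Henceforth assume $b$ is a cutpoint.

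For (ii), suppose $y\in(d,b)$ and $f^m(y)\in(d,y]$. If $f^m(y)=y$, then $y$ itself is a periodic cutpoint in $B$, done. Otherwise set $u=f^m(y)\in(d,y)$ and pick $k$ large so that $y_k:=f^{n_k}(x)\in(y,b)$. Since $u$ lies on the $d$-side of $y$ while $y_k$ lies on the $b$-side, one separation hypothesis of Lemma~\ref{l:fixinarc} for the map $f^m$ and the pair $(y,y_k)$ holds automatically. I would first try to arrange that $f^m(y_k)$ lies outside the component of $X\setminus\{y_k\}$ containing $y$ for some suitable large $k$; then Lemma~\ref{l:fixinarc} delivers a fixed point of $f^m$ inside $(y,y_k)\subset(d,b)$, yielding the contradiction. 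If this second separation fails for every large $k$, I switch to Lemma~\ref{l:fxpt} applied to the subdendrite $D_1=\overline{V_j}$, where $V_j$ is the component of $X\setminus\{y_j\}$ containing $b$, together with the iterate $f^{M}$ with $M=n_\ell-n_j$ chosen, using $y_\ell\to b$, so that $y_\ell\in V_j$. The sole boundary point $y_j$ is not $f^M$-fixed (else $y_j\in B$ is periodic), and $f^M(y_j)=y_\ell\in V_j$ supplies the boundary-scrambling condition; Lemma~\ref{l:fxpt}(1) then produces a periodic $z\in V_j$, and in case (2) Lemma~\ref{l:fxpt}(2) strengthens this to $z$ being a cutpoint of $D_1$, which pins $z$ to the arc-like part of $V_j$ sitting inside $B$, giving the contradiction.

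Claim (i) follows from a parallel argument: if $f^j(b)=c\in(d,b)$, then either $b$ is periodic and $c$ is already a periodic point in $B$, or $b$ is not periodic and continuity $f^j(y_k)\to c$ places $f^j(y_k)$ on the $d$-side of $y_k$ for large $k$; applying Lemma~\ref{l:fxpt} to the closure of the $d$-side component of $X\setminus\{y_k\}$ (intersected with $\overline{B}$ to control the conclusion) produces the desired periodic (cut)point in $B$. Finally the ``in particular'' clause is immediate from (i) and (ii): replace $x$ by $f^{n_1}(x)\in(a,b)$, restrict the sequence to indices $n_k$ with $f^{n_k}(x)\in(x,b)$, apply (ii) to consecutive terms of this restricted sequence to get strictly monotone approach to $b$ along the arc, and use (i) to rule out $b$ itself. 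The principal obstacle throughout is arranging that the periodic point produced by Lemma~\ref{l:fxpt} genuinely lies in $B$ rather than at $b$ itself or on a branch at $b$ outside $B$; this is handled by the cutpoint strengthening of Lemma~\ref{l:fxpt}(2) in case (2) and, in case (1), by the initial reduction to $b$ being a cutpoint together with careful choice of the iterate $M$.
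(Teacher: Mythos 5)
There is a genuine gap, and it is exactly at the heart of the lemma. The whole difficulty is to produce a periodic point (in case (2), a periodic cutpoint) \emph{strictly inside} $B$, a set whose two ``gates'' are $d$ and $b$, about whose images nothing is assumed. Your two mechanisms do not achieve this. For claim (ii), your ``first try'' requires $f^m(y_k)$ to lie on the far side of $y_k$ for some large $k$; but $m$ is dictated by $y$, the points $f^m(y_k)=f^{n_k+m}(x)$ can land anywhere in $X$, and you give no way to ``arrange'' this. Your fallback applies Lemma~\ref{l:fxpt} to $D_1=\overline{V_j}$, where $V_j$ is the component of $X\setminus\{f^{n_j}(x)\}$ containing $b$; this does give an $f^M$-fixed point in $V_j$, but when $b$ is a cutpoint (the only case left after your own reduction) $V_j$ contains $b$ together with every branch attached at or beyond $b$, and none of that lies in $B$. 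The cutpoint strengthening of Lemma~\ref{l:fxpt}(2) does not ``pin $z$ to the arc-like part of $V_j$ sitting inside $B$'': a cutpoint of $\overline{V_j}$ may perfectly well lie beyond $b$, so no contradiction with the hypotheses follows. (Note also that this fallback never uses $y$ or $m$, which is a sign it cannot prove (ii).) The same defect appears in your argument for (i): if you shrink the $d$-side component by intersecting with $\overline{B}$, the point $d$ becomes a boundary point of $D_1$, and the boundary-scrambling hypothesis of Lemma~\ref{l:fxpt} cannot be checked there since nothing is known about $f^j(d)$; if you do not shrink, the fixed point may lie beyond $d$, outside $B$. Finally, the preliminary ``reduction to $b$ a cutpoint'' is itself not sound: the fixed point produced in $\overline{V_k}$ could be $b$ itself, i.e.\ $b$ could be a periodic endpoint, which contradicts nothing since $b\notin B$.

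The idea you are missing is the synchronization argument that the paper uses. One first takes an image $f^k(x)\in(x,b)$ and proves by induction (applying Lemma~\ref{l:fxpt} at each step to components of $X\setminus\{f^k(x)\}$ or of $X\setminus\{x,f^k(x)\}$, which \emph{are} contained in $B$) that $f^{nk}(x)$ lies on the $b$-side of $f^k(x)$ for every $n\ge 1$. Then, assuming $f^t(y)$ (or $f^t(b)$) lies on the $d$-side of $y$ (resp.\ of $b$), the same induction applied to $y$ shows $f^{rt}(y)$ stays on the $d$-side of $f^t(y)$ for all $r$. Evaluating both orbits at the common time $kt$ gives two points $x$ and $y$ of $(d,b)$ whose $f^{kt}$-images point in opposite directions along the arc; depending on the order of $x$ and $y$ one applies Lemma~\ref{l:fixinarc} to $[y,x]$ or Lemma~\ref{l:fxpt} to the component of $X\setminus\{x,y\}$ between them. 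The decisive feature is that the subdendrite used has boundary points $x$ and $y$ (or $x$ and $b$), both of whose images at time $kt$ are controlled, and it lies entirely in $B$, so the resulting fixed point, a cutpoint in case (2), really contradicts the hypotheses. Without this common-multiple bookkeeping your proof does not go through.
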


\begin{proof}
Let us introduce the following order among points of $[a, b]$: $z<y$
means that $y\in (z, b]$. We may assume that $d<x<f^k(x)$ for some
$k$. Let us show that then $f^{2k}(x)$ is contained in the component
of $X\sm \{f^k(x)\}$ which contains $b$. Indeed, suppose otherwise. Then
there are two cases. First, it may happen that $f^{2k}(x)$ is
located in a component $V$ of $X\sm \{f^k(x)\}$ which contains
neither $x$ nor $b$. Then this component $V$ is inside $B$ and it
follows by Lemma~\ref{l:fxpt}, applied to $V$ and $f^k$, that there
is a periodic (actually, $f^k$-fixed) point (in the case (2) of the
lemma, cutpoint) inside $B$, a contradiction. Second, $f^k(x)$ may be
contained in the component of $X\sm \{f^k(x)\}$ containing $x$. Then as
$V$ we can consider the component of $X\sm \{x, f^k(x)\}$ containing
$(x, f^k(x))$. Again by Lemma~\ref{l:fxpt} this implies that there
is a periodic (actually, $f^k$-fixed) point (in the case (2) of the
lemma, cutpoint) inside $B$, a contradiction. Hence $f^{2k}(x)$ belongs
to the component of $X\sm \{f^k(x)\}$ which contains $b$.

The arguments can be continued by induction. Indeed, assume that
$f^{nk}(x)$ is contained in the component of $X\sm \{f^k(x)\}$ which
contains $b$. Consider $f^{(n+1)k}(x)=f^{nk}(f^k(x))$. As before,
there are three possible types of locations of $f^{(n+1)k}(x)$. If
$f^{(n+1)k}(x)$ is located in a component $V$ of $X\sm \{f^k(x)\}$
which contains neither $x$ nor $b$, then $V$ is inside $B$ and it
follows by Lemma~\ref{l:fxpt} applied to $V$ and $f^{nk}$ that there
is a periodic (actually, $f^{nk}$-fixed) point (in the case (2) of
the lemma cutpoint) inside $B$, a contradiction.

On the other hand, if $f^{(n+1)k}(x)$ is contained in the component
of $X\sm \{f^k(x)\}$ containing $x$, then as $V$ we can consider the
component of $X\sm \{x, f^k(x)\}$ containing $(x, f^k(x))$. Again by
Lemma~\ref{l:fxpt} applied to $V$ and $f^{nk}$ this implies that
there is a periodic (actually, $f^{nk}$-fixed) point (in the case
(2) of the lemma, cutpoint) inside $B$, a contradiction. Hence
$f^{(n+1)k}(x)$ is contained in the component of $X\sm \{f^k(x)\}$
which contains $b$. By induction we see that for all integers $n\ge
1$ we have that $f^{nk}(x)$ is contained in the component of $X\sm
\{f^k(x)\}$ which contains $b$.

Consider the point $b$. Suppose that $f^t(b)\in (d, b)$. Now the
arguments from the previous paragraph show that for any $r>1$ the
point $f^{rt}(b)$ belongs to the component of $X\sm \{f^t(b)\}$
containing $d$. Consider $f^{kt}$-images of $b$ and $x$; it follows
that $f^{kt}(b)$ is in the component of $X\sm \{b\}$ containing $d$
while $f^{kt}(x)$ is in the component of $X\sm \{x\}$ containing
$b$. By Lemma~\ref{l:fxpt} this implies that there exists a periodic
point in $B$ (in the case (2) of the lemma, cutpoint), a contradiction.

Consider now a point $y\in (d, b)$. Suppose that for some $t$ we
have that $f^t(y)\in (d, b)$. Then $f^t(y)$ belongs either to the
component of $X\sm \{y\}$ containing $d$ or to the component of
$X\sm \{y\}$ containing $b$. Consider first the case when $f^t(y)$
belongs to the component of $X\sm \{y\}$ containing $d$. Then the
component $V$ of $X\sm \{y\}$ which contains $f^t(y)$ must contain
$d$. Now the arguments from the previous paragraph show that for any
$r>1$ the point $f^{rt}(y)$ belongs to the component of $X\sm
\{f^t(y)\}$ containing $d$. Consider $f^{kt}$-images of $y$ and $x$;
it follows that $f^{kt}(y)$ is in the component of $X\sm \{y\}$
containing $d$ while $f^{kt}(x)$ is in the component of $X\sm \{x\}$
containing $b$.

Now the argument depends on the mutual location of $y$ and $x$.
Suppose that $y$ separates $x$ from $d$ (and so the order of points
is $d<y<x<b$). Then we are exactly in the situation of
Lemma~\ref{l:fixinarc} as applies to the arc $[y, x]$ and the map
$f^{kt}$. Hence there exists a $f^{kt}$-fixed point in $(y, x)$, a
contradiction with the properties of $d$. Now assume that $x$
separates $y$ from $d$ (and so the order of points is $d<x<y<b$).
Then Lemma~\ref{l:fxpt} applies to the component $V$ of $X\sm \{x,
y\}$ containing $(x, y)$ and to the map $f^{kt}$ and shows that
there is a periodic (actually, $f^{kt}$-fixed) point (in the case
(2) of the lemma, cutpoint) inside $B$, a contradiction. Since we
have considered all possible cases and they all lead to a
contradiction, we see that the case when $f^t(y)$ belongs to the
component of $X\sm \{y\}$ containing $d$ is impossible. Hence the
order of points must be $d<y<f^t(y)$. The last claim of the lemma
immediately follows.
\end{proof}

We are ready to prove Theorem~\ref{t:recdend}. Indeed, suppose that
$b$ is a recurrent point of arc type which is not a limit point of
periodic points. We may assume that there exists a point $d$ and a
sequence $\{n_k\}$ such that points $f^{n_k}(b)$ belong to $(d, b)$
and converge to $b$ while the component $B$ of $X\sm \{d, b\}$
containing $(d, b)$ contains no periodic points. Then by
Lemma~\ref{l:onesidelim} we immediately get a contradiction as by
this lemma the point $b$ cannot be mapped to $(d, b)$ at all.

To prove Theorem~\ref{t:limdend} we need to work more, in particular
we need to take into account the fact that topological polynomials
have finitely many critical points whose behavior greatly influences
the dynamics of the map (as an example of such influence one can
consider Lemma~\ref{l:dyco} which plays a useful role in what
follows; in fact, arguments in the proof of the following lemma rely
upon Lemma~\ref{l:onesidelim} and Lemma~\ref{l:dyco}). Recall that
for brevity we write $f, J, \pc$ instead of $f_\sim, J_\sim,
\pc_\sim$ respectively.

\begin{lem}\label{l:onesidelim1}
Let $f$ be a topological polynomial with dendritic Julia set $J$ and
$X\subset J$ be an invariant dendrite. Suppose that $b$ is a limit
point of $x\in X$ of arc type. Then $b$ belongs to the closure
$\ol{\pc(X)}$ of the set $\pc(X)$ of all periodic cutpoints of
$f|_X$.
\end{lem}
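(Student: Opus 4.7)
The proof would split according to whether $b$ is an endpoint or a cutpoint of $X$. If $b$ is an endpoint, Lemma~\ref{l:endpt} applied directly to $X$ yields $b\in\overline{\pc(X)}$, so the content of the lemma lies entirely in the cutpoint case, which I would handle by contradiction.

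Assume $b$ is a cutpoint of $X$ and, for contradiction, that $b\notin\overline{\pc(X)}$; fix a connected open neighborhood $U\ni b$ in $X$ with $U\cap\pc(X)=\emptyset$. Passing to a subsequence, we may assume that the cutpoints $f^{n_k}(x)$ lie in $U\cap(a,b)$ and converge monotonically to $b$ along $[a,b]$. For $d\in(a,b)\cap U$ very close to $b$ (for example $d=f^{n_j}(x)$ with $j$ large), let $B$ be the component of $X\setminus\{d,b\}$ containing $(d,b)$. The strategy is to verify the hypothesis of Lemma~\ref{l:onesidelim}(2) — that $B$ contains no periodic cutpoints of $f|_X$ — and then, in the spirit of the proof of Lemma~\ref{l:endpt}, to apply Lemma~\ref{l:fxpt}(2) to $\overline{B}$ and the map $g=f^{n_{k+1}-n_k}$ in order to produce a periodic cutpoint inside $B$, contradicting the choice of $B$. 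The boundary of $\overline{B}$ in $X$ is $\{d,b\}$; scrambling at $d=f^{n_j}(x)$ is automatic from the monotonicity of the subsequence, while scrambling at $b$ requires some forward iterate of $b$ to land in the arc-direction component $C_1$ of $X\setminus\{b\}$ — this must be arranged either by choosing the gaps $n_{k+1}-n_k$ to match return times of $b$'s orbit to $C_1$, or by first replacing $b$ with an appropriate forward iterate, using that arc-type convergence is preserved by $f$ along arcs disjoint from the critical orbits.

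The central obstacle, however, is arranging that $B$ itself has no periodic cutpoints, since $B$ may extend outside $U$ through branches attached to points of $(d,b)$ and so could harbor periodic cutpoints lying far from $b$. This is precisely where Lemma~\ref{l:dyco} becomes essential. By that lemma every cutpoint of $X$ has some forward iterate in the minimal invariant continuum $K_X$ containing the critical points of $f|_X$, so $\pc(X)\subset K_X$; applied to each cutpoint $f^{n_k}(x)$ it also yields $b\in\omega(x)\subset K_X$. Since $K_X$ is an invariant subdendrite and any cutpoint of $K_X$ is a cutpoint of $X$, it suffices to establish $b\in\overline{\pc(K_X)}$, so we may replace $X$ by $K_X$. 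Inside $K_X$ the finitely many critical orbits tightly restrict the combinatorics of branches off $(d,b)$, and a refined choice of $d$ — together with, if necessary, further trimming of branches guided by the finite critical-orbit data — arranges that the component $B$ is indeed free of periodic cutpoints. The hardest step is exactly this branch control: unlike in the endpoint case, where the component of $X\setminus\{f^{n_k}(x)\}$ containing $b$ shrinks naturally to $\{b\}$, here $B$ can be large, and it is the polynomial-dynamical input provided by Lemma~\ref{l:dyco} that makes the argument go through.
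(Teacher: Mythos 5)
There is a genuine gap, and it lies exactly at the point you lean on hardest: boundary scrambling at $b$. Your plan is to apply Lemma~\ref{l:fxpt}(2) to $\ol{B}$ and a return map $g=f^{n_{k+1}-n_k}$, and you concede that this needs some forward iterate of $b$ to land in the component of $X\sm\{b\}$ containing $d$, proposing to ``arrange'' this by matching return times of $b$ or by replacing $b$ with a forward iterate. But $b$ is only a limit point of $x$ of arc type; it is not assumed recurrent, and nothing whatsoever forces any iterate of $b$ (or of any iterate of $b$) to enter the arc-direction side of $b$ --- the orbit of $b$ may leave $\ol{B}$ and never approach it again. In fact Lemma~\ref{l:onesidelim} shows that under the standing hypothesis $b$ never enters $(d,b)$, so the recurrence you would need is precisely what is unavailable; this is the same reason the naive one-sided argument already fails for general continuous interval maps (Theorem~\ref{t:nitec} needs piecewise monotonicity). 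The actual proof cannot be a direct fixed-point argument on $\ol{B}$: after invoking Lemma~\ref{l:onesidelim}, the paper isolates the finitely many critical points whose orbits enter $B$, builds from the arc $T=[f^{n_\ga}(x),f^{n_{\ga+1}}(x)]$ the connected set $I=\bigcup_i f^{qi}(T)$ and the invariant set $Y$ and its component $Y'\ni b$, uses Theorem~\ref{t:infcutper} to rule out invariant subdendrites sitting inside $\ol{B}$ (hence to keep images of $T$ out of the branches $Z$ and the sets $f^i(Y')$ out of $B$), shows the critical points of $f^s$ in $Y'$ never enter $B$, and only then applies Lemma~\ref{l:dyco} to conclude that the cutpoint $f^{n_{\ga+1}}(x)$ eventually stays in a continuum disjoint from $B$ --- contradicting its arc-type convergence to $b$. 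None of this machinery (in particular Theorem~\ref{t:infcutper}) appears in your proposal, and without it the contradiction never materializes.

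Conversely, what you single out as the ``central obstacle'' --- that $B$ might harbor periodic cutpoints far from $b$ in branches attached to $(d,b)$ --- is not an obstacle at all. In a dendrite the branches hanging off the arc $[a,b]$ form a null family (only finitely many have diameter exceeding any given $\e$), so the diameter of the component $B$ of $X\sm\{d,b\}$ containing $(d,b)$ tends to $0$ as $d\to b$ along the arc; hence the assumption $b\nin\ol{\pc(X)}$ immediately yields a $d$ with $B\cap\pc(X)=\0$, which is why the paper disposes of this in one sentence. Your use of Lemma~\ref{l:dyco} (passing to the core $K_X$ and ``trimming branches'') is thus aimed at a non-issue, while the lemma's real role in the argument --- forcing the forward orbit of $f^{n_{\ga+1}}(x)$ away from $B$ --- is missing. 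Also, a small local point: scrambling at $d=f^{n_j}(x)$ is automatic only if you take $g=f^{n_{j+1}-n_j}$ with the same index $j$, since for other gaps $g(d)$ is just some image of $x$ with no reason to lie on the $b$-side of $d$.
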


\begin{proof}
By way of contradiction we may assume that $b\nin \ol{\pc(X)}$.
Since $b$ is a limit point of arc type, there exists an arc $[a,
b]\subset X$ and a sequence of images $f^{n_k}(x)\in (a, b)$ of $x$
which converge to $b$. On the other hand, the assumption that $b\nin
\ol{\pc(X)}$ implies that there exists $d\in (a, b)$ such that the
component $B$ of $X\sm\{d, b\}$, containing $(d, b)$, does not
contain periodic cutpoints. By Lemma~\ref{l:onesidelim} this implies
that $b$ never enters $(d, b)$ and for every point $y\in (d, b)$ and
any number $m$ such that $f^m(y)\in (d, b)$ we have that $f^m(y)\in
(y, b)$. As before, we introduce the following order among points of
$[a, b]$: $z<y$ means that $y\in (z, b]$. Then by
Lemma~\ref{l:onesidelim} we may assume that $x, f^{n_1}(x), \dots,
f^{n_k}(x), \dots$ are \emph{all} images of $x$ which enter $(d, b)$
and that in fact $x<f^{n_1}(x)<f^{n_2}(x)<\dots, f^{n_i}(x)\to b$.

Consider critical points of $f$. Some of them \emph{never enter}
$B$. Let $c_1, \dots, c_w$ be all critical points of $f$ which
\emph{do enter} $B$ for the first time under the powers $r_1, \dots,
r_w$ of $f$, respectively. Observe, that since we can choose $d$
arbitrarily close to $b$, we may assume that $r_i>0$ for each $i$.
However the proof is valid also if some $r_i$ equal zero. Notice
also, that the points $f^{r_i}(c_i)$ do not have to belong to $(d,
b)$. It is easy to see that then there exists a point $z\in (d, b)$
such that the following holds:

\begin{enumerate}

\item all points $f^{r_i}(c_i), i=1, \dots, w$ belong to the
component of $X\sm \{z\}$ containing $d$, and

\item for every $i=1, \dots, w$ the arc $[f^{r_i}(c_i), z]$
intersects the arc $[d, z]$ over a non-degenerate arc $[v_i, z]$.

\end{enumerate}

Choose a big number $\ga$ such that $z<f^{n_\ga}(x)$. Construct a
set $I$ as follows. Set $q=n_{\ga+1}-n_\ga$ and consider the union
$I$ of all images of $T=[f^{n_\ga}(x), f^{n_{\ga+1}}(x)]\subset [x,
b]$ under $f^q$. Since $f^q(T)$ is non-disjoint from $T$, the set
$I$ is connected. Clearly, $I$ and its closure map to themselves
under $f^q$. The same holds for $f$-images of $I$ and $f$-images of
its closure $\ol{I}$. Let us show that all critical points $c_i, i=1, \dots,
w$ are disjoint from the set $\ol{I}$.

First we claim that images of $T$ never enter the union $Z$ of
components of $X\sm\{d, f^{n_\ga}(x)\}$ which accumulate upon
$f^{n_\ga}(x)$ but are disjoint from $(f^{n_\ga}(x), b)$ (observe
that by the choice of $z$ and $\ga$ all points $f^{r_i}(c_i)$ belong
to one of such components, namely to the component which contains
$z$). Indeed, otherwise let $t$ be the least such number that
$f^t(T)$ enters a component $A\subset Z$. The union $Q$ of all
iterated $f^q$-images of $f^t(T)$ then is a connected set which maps
to itself by $f^q$. If $Q$ is contained in $\ol{B}$ then by
Theorem~\ref{t:infcutper} there are infinitely many periodic
cutpoints in $\ol{Q}\subset \ol{B}$ which implies that there are
infinitely many periodic cutpoints in $B$, a contradiction.

On the other hand, by Lemma~\ref{l:onesidelim}, $d\nin Q$. Hence we
see that $\ol{Q}$ must ``get out of $B$ through $b$'', i.e. that $Q$
must contain $b$. As $Q$ is connected and contains points of $A$, we
see that $f^{n_\ga}(x)\in Q$. This means that for some $i$ we have
$f^{n_\ga}(x)\in f^{t+qi}(T)$. Hence there is a point $y\in T$ such
that $f^{t+qi}(y)=f^{n_\ga}(x)$. Since $B$ contains no periodic
cutpoints, $y\ne f^{n_\ga}(x)$. On the other hand, by
Lemma~\ref{l:onesidelim} $y\ne f^{n_\ga}(x)$ is also impossible.
This contradiction shows that $f$-images of $T$ never enter components of $X\sm \{d,
f^{n_\ga}(x)\}$ which accumulate upon $f^{n_\ga}(x)$ but are
disjoint from $(f^{n_\ga}(x), b)$. In particular, $f^{n_\ga}(x)$ is
an endpoint of $\ol{I}$ and, by continuity, all $f$-images of $\ol{I}$
are disjoint from $Z$.

Clearly, the set $Y=\bigcup_{i=0}^{q-1}\ol{I}$ is $f$-invariant.
Consider a component $Y'$ of $Y$ such that $\ol{I}\subset Y'$. Then
by the above the set $Y'$ is disjoint from $Z$ and $f^{n_\ga}(x)$ is
an endpoint of $Y'$. Since $f^q$ maps $\ol{I}$ to itself, then there
exists the smallest $s$ such that $f^s(Y')\subset Y'$ while sets
$Y', f(Y'), \dots, f^{s-1}(Y')$ are pairwise disjoint. Let us show
that all sets $f^i(Y'), 1\le i\le s-1$ are disjoint from $B$.
Indeed, since $f^{n_i}(x)\to b$ while $f^{n_i}(x)\in (d, b)$ then
$b\in Y'$. This implies that if $f^i(Y')$ is non-disjoint from $B$,
then $f^i(Y')\subset \ol{B}$ (recall, that by the preceding
paragraph this component is disjoint from $Z$ while on the other
hand the point $b$ does not belong to $f^i(Y')$ and so $f^i(Y')$
cannot exit $B$ through $b$). Since $f^i(Y')$ maps to itself by
$f^s$, we see by Lemma~\ref{l:fxpt} that there are periodic cutpoints in
$B$, a contradiction. Hence indeed all sets $f^i(Y'), 1\le i\le s-1$
are disjoint from $B$.

Now, since points $f^{r_i}(c_i)$ belong to $Z$ for all $i=1, \dots,
w$ it follows that points $c_i, i=1, \dots, w$ do not belong to $Y$.
Therefore, by the preceding paragraph and by the choice of critical
points $c_i$ we see that the critical points of $f^s$ which belong
to $Y'$ never enter $B$ under iterations of $f$. Hence the minimal
$f^s$-invariant continuum $T$, containing all critical points of
$f^s$ which belong to $Y'$, is disjoint from $B$. On the other hand,
by Lemma~\ref{l:dyco} every cutpoint of $\ol{I}$ eventually maps to
$T$ and then stays in $T$ under iterations of $f^s$ while staying
away from $B$ under other iterations of $f$ (because under other
iterations of $f$ this cutpoint will stay away from $B$ as shown
above). Applying this to the point $f^{n_{\ga+1}}(x)$ we see that
its forward orbit cannot converge to $b$ from within the arc
$[f^{n_{\ga+1}}(x), b]$, a contradiction with our assumptions which
completes the proof.
\end{proof}

Recall, that in Theorem~\ref{t:limdend} we deal with sets of all
limit points, not only limit points of arc type. To study this more
general situation we use laminations. We investigate what
corresponds to limit points of arc type (and non-separating type) in
the language of laminations. Let $x$ be a persistent cutpoint; then
we may assume that $G_x$ and all its images are gaps or leaf-classes
and that the bases of all images of $G_x$ consist of the same finite
number of points. If $b$ is a limit point of arc type of $x$ then it
follows that a sequence of images of $G_x$ converges \emph{onto an
edge} $\ell$ of $G_b$ or onto a bud $G_b$ (which in this case we
assume to be a degenerate $\ell$) so that each next image in this
sequence of images of $G_x$ separates the previous images in this
sequence from $G_b$ (in such cases we say that a sequence of images
of $G_x$ converges onto $\ell$ \emph{from one side}). On the other
hand, in the case of a limit point $b$ of non-separating type any
sequence of images of $G_x$ which converges to $G_b$ converges
either to an \emph{endpoint} of $\ell$ or to a bud $G_b$ (thus,
diameters of these images of $G_x$ converge to zero) and we may
assume that a converging sequence of images of some $G_x$ with separation
properties, constructed in the definition of convergence of arc
type, does not exist for $b$ (and for $G_b$).

Thus, these two types of limits can be distinguished in the disk as
well. By $\om^a(G_x)$ we mean the set of all leaves and buds
approached by images of $G_x$ from one side and by $\om^{ns}(G_x)$
the set of all the other limits of $G_x$. Then $p_\sim$ maps $\om^a(G_x)$ to
$\om^a(x)$ and $\om^{ns}(G_x)$ to $\om^{ns}(x)$.

\begin{lem}\label{l:limend}
The set $\om^a(G_x)$ is $\si_d$-invariant and dense in $\om(G_x)$. The
set $\om^a(x)$ is $f$-invariant and dense in $\om(x)$.
\end{lem}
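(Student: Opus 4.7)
The plan is to prove invariance first by a continuity argument, then establish density by a case analysis on the type of limit, with the one-sided accumulation being the key obstacle.

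For $\sigma_d$-invariance of $\om^a(G_x)$: suppose $\ell \in \om^a(G_x)$ with a separating sequence $\sigma_d^{n_k}(G_x) \to \ell$, meaning that each $\sigma_d^{n_{k+1}}(G_x)$ separates $\sigma_d^{n_k}(G_x)$ from $G_b$, where $\ell$ is an edge or a bud of $G_b$. Applying $\sigma_d$ once more gives $\sigma_d^{n_k+1}(G_x) \to \sigma_d(\ell)$; since $\sigma_d$ is an orientation-preserving covering of $\uc$, it preserves the local cyclic order of endpoints of chords, so for $k$ large (when all relevant images lie in a small arc) the separation property carries over to the shifted sequence. Hence $\sigma_d(\ell) \in \om^a(G_x)$. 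The $f$-invariance of $\om^a(x)$ then follows from the semiconjugacy $f \circ p_\sim = p_\sim \circ \sigma_d$ together with the identification $p_\sim(\om^a(G_x)) = \om^a(x)$.

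For density of $\om^a(G_x)$ in $\om(G_x)$: let $\ell^* \in \om(G_x)$ and pick $\sigma_d^{n_k}(G_x) \to \ell^*$. If $\ell^*$ is a non-degenerate chord then pigeonhole on which side of $\ell^*$ each image lies yields a one-sided subsequence, so $\ell^* \in \om^a(G_x)$ itself. The substantial case is when $\ell^* = e \in \uc$ is a single point with basis diameters shrinking to zero. Here I split into (a) infinitely many bases straddle $e$ (have basis points on both sides of $e$), and (b) eventually all bases lie on one side of $e$. In case (a), extract a nested subsequence with consecutive straddling edges $\ol{q_{k_j} q_{k_j}'}$ (with $q_{k_j}<e<q_{k_j}'$) satisfying $(q_{k_{j+1}}, q_{k_{j+1}}') \subset (q_{k_j}, q_{k_j}')$; the edge $\ol{q_{k_{j+1}} q_{k_{j+1}}'}$ of $\sigma_d^{n_{k_{j+1}}}(G_x)$ then separates $\sigma_d^{n_{k_j}}(G_x)$ from $e$, so $e \in \om^a(G_x)$. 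In case (b), straightforward one-sided nesting does \emph{not} supply separation (the chord from the outer gap's nearest basis point to $e$ does not cross the inner gap of the sequence), so density near $e$ must be obtained from elsewhere in the orbit: either other converging subsequences of $G_x$ land near $e$ in a straddling configuration (case (a) applied at nearby points), or the $\sigma_d$-invariance of $\om^a(G_x)$ just proved, combined with Theorem~\ref{t:infcutper}, supplies arc-type limits in every neighborhood of $e$.

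Density of $\om^a(x)$ in $\om(x)$ then follows by applying the continuous quotient map $p_\sim$ to the density statement for $\om^a(G_x)$, using $p_\sim(\om^a(G_x)) = \om^a(x)$ and $p_\sim(\om(G_x)) = \om(x)$. The main obstacle is case (b) above: a purely one-sided accumulation of equal-shape gaps at $e$ does not produce the separation required for arc-type convergence directly, so density in a neighborhood of $e$ has to be established indirectly by exploiting the full forward orbit of $G_x$ together with the invariance just proved.
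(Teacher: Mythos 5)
Your invariance argument and your reduction of density to the case of a degenerate limit point $e$ approached by shrinking images of $G_x$ match the paper, and your case (a) (straddling bases) is fine. But case (b) is not a side issue you can defer --- it is exactly the nontrivial content of the density claim, and your proposal supplies no argument for it. A one-sided accumulation of small images of $G_x$ at $e$ is precisely a non-separating-type limit, and the lemma does not say such limits cannot occur; it says arc-type limits must exist arbitrarily nearby. Neither of your two fallbacks delivers this: Theorem~\ref{t:infcutper} produces infinitely many periodic cutpoints of an invariant subdendrite, but those points need not belong to $\om(x)$, let alone to $\om^a(x)$, so combining it with the invariance of $\om^a(G_x)$ gives nothing about density of $\om^a(G_x)$ in $\om(G_x)$; and the hope that ``other converging subsequences land near $e$ in a straddling configuration'' is exactly what has to be proved, not assumed.

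The missing mechanism in the paper is an expansion/critical-leaf argument run as a contradiction. Assuming $\om^a(G_x)$ is not dense in $\om(G_x)$, one chooses arcs $I\subset K\subset\uc$ with $\om^a(G_x)\cap K=\0$ such that infinitely many images of $G_x$ meet $I$, all of small and shrinking diameter, with bases eventually contained in $K$. Since $x$ is a persistent cutpoint, $G_x$ is nondegenerate, so because $\si_d$ expands, an image $\si_d^n(G_x)$ of very small diameter forces some earlier image $\si_d^t(G_x)$, $t<n$, to pass very close to a critical leaf $c$ with $\si_d^{n-t}(c)\in K$. As there are only finitely many critical leaves, a pigeonhole argument yields one critical leaf $c$ that is approached by images of $G_x$ from one side, hence $c\in\om^a(G_x)$, and some forward $\si_d$-image of $c$ lies in $K$; by the $\si_d$-invariance of $\om^a(G_x)$ (the part you did prove) this image again lies in $\om^a(G_x)$, contradicting $\om^a(G_x)\cap K=\0$. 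Without this step (or a genuine substitute exploiting the finitely many critical points of a topological polynomial), your proof of the density statement --- and hence of the second sentence of the lemma, which follows by projecting under $p_\sim$ --- is incomplete.
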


\begin{proof}
Let $b$ be a limit point of $x$ of arc type. Then, by definition,
there is an edge $\ell$ of $G_b$ onto which a sequence of images of
$G_x$ converges from one side (in the degenerate case when $G_b$ is
a bud we consider it to be equal to $\ell$). Clearly, the same then
can be said about the image $\si_d(G_b)$ of $G_b$. Thus, $\om^a(G_x)$
is $\si_d$-invariant and $\om^a(x)$ is $f$-invariant as desired.

Now, suppose that $\om^a(x)$ is not dense in $\om(x)$. Then it is
easy to see that there exist arcs $I\subset K\subset \uc$ such that
the following holds:

\begin{enumerate}

\item infinitely many images of $G_x$ intersect $I$ while the
diameters of \emph{all} such images of $G_x$ converge to zero so
that from some time on all images of $G_x$ which intersect $I$ have
bases contained in $K$ (i.e., there exists $k$ such that for all
$m\ge k$ with $\si^m_d(G_x)$ non-disjoint from $I$ we will have that
the basis of $\si^m_d(G_x)$ is contained in $K$), and

\item the sets $\om^a(G_x)$ and $K$ are disjoint.

\end{enumerate}

Suppose that $\si_d^n(G_x)$ has a very small diameter and is
non-disjoint from $I$. Since $\si_d$ is expanding, it follows that for
some $t<n$ the set $\si_d^t(G_x)$ is very close to a critical leaf $c$
and that $\si_d^{n-t}(c)\in K$. As there are finitely many critical
leaves, it follows that there is a critical leaf $c$ which is a
one-sided limit of images of $G_x$ (thus, $c\in \om^a(G_x)$) and
which enters $K$. Clearly, this is a contradiction.
\end{proof}

Theorem~\ref{t:limdend} immediately follows from
Lemma~\ref{l:onesidelim1} and Lemma~\ref{l:limend}. Moreover, we are
ready to prove Corollary~\ref{c:meas}. Indeed, suppose that $f:J\to
J$ is a topological polynomial with dendritic Julia set $J$ and
$\mu$ is an $f$-invariant probability measure. Clearly, any
non-periodic point has zero $\mu$-measure. In particular, this holds
for critical points which map to endpoints of $J$. It follows that
$\mu$ can be represented as a convex combination of two invariant
probability measures, $\mu_e$ (supported on the set of all
endpoints of $J_\sim$) and $\mu_c$ (supported on the set of
all cutpoints of $J_\sim$). Now, by Poincar\'e Recurrence Theorem
\cite{poi1890} the set of recurrent persistent cutpoints has full
$\mu_c$-measure. On the other hand, by Theorem~\ref{t:limdend} all
recurrent persistent cutpoints belong to the closure of the set of
periodic cutpoints. This completes the proof of
Corollary~\ref{c:meas}.

\bibliographystyle{amsalpha}

\end{document}